\newtheorem{remark}[theorem]{Remark}
\title{Morley-Wang-Xu element methods with penalty for a fourth order elliptic singular perturbation problem}%
\author{
 Wenqing Wang\thanks{Department of Basic Teaching, Wenzhou Business College, Wenzhou 325035, China (wangwenqing81@hotmail.com).}
\and
Xuehai Huang\thanks{Corresponding author. College of Mathematics and Information Science, Wenzhou University, Wenzhou 325035, China (xuehaihuang@gmail.com). The work of this author was supported by the NSFC Projects 11771338 and 11671304, Zhejiang Provincial
Natural Science Foundation of China Projects LY17A010010, LY15A010015 and LY15A010016, and Wenzhou Science and Technology Plan Project G20160019.}
\and
Kai Tang\thanks{College of Mathematics and Information Science, Wenzhou University, Wenzhou 325035, China (790251532@qq.com, 1134612921@qq.com).}
\and
Ruiyue Zhou\footnotemark[3]
}%
\begin{document}

\maketitle

% ----------------------------------------------------------------
\begin{abstract}
Two Morley-Wang-Xu element methods with penalty for the fourth order elliptic singular perturbation problem are proposed in this paper,
including the interior penalty Morley-Wang-Xu element method and the super penalty Morley-Wang-Xu element method.
The key idea in designing these two methods is combining the Morley-Wang-Xu element and penalty formulation for the Laplace operator.
Robust a priori error estimates are derived under minimal regularity assumptions on the exact solution by means of some established a posteriori error estimates. Finally, we present some numerical results to demonstrate the theoretical estimates.
\end{abstract}
\begin{keywords}
Fourth Order Singular Perturbation Problem, Morley-Wang-Xu Element, Interior Penalty Method, Super Penalty Method, Error Analysis
\end{keywords}
% ----------------------------------------------------------------

\section{Introduction}

Assume that $\Omega\subset \mathbb{R}^d$ with $d=2, 3$ is a bounded
polytope. Let $f\in L^2(\Omega)$,
then the fourth order elliptic singular perturbation problem
is to find $u\in H_0^2(\Omega)$ satisfying
\begin{equation}\label{fourthorderpertub}
  \begin{cases}
  \varepsilon^2\Delta^{2}u-\Delta u=f \quad\;\; \textrm{in}~\Omega, \\
  u=\partial_nu=0 \quad\quad\quad \textrm{on}~\partial\Omega,
  \end{cases}
\end{equation}
where $n$ is the unit outward normal to $\partial\Omega$, and $\varepsilon$ is a real small and positive parameter.
Problem~\eqref{fourthorderpertub} models the thin buckling plates with $u$ being the displacement in two dimensions \cite{Frank1997}.
It can also be viewed as a simplification of the stationary Cahn-Hilliard equation in three dimensions \cite{Novick-Cohen2008} and the linearization of the vanishing moment method for the fully nonlinear Monge-Amp$\grave{\textrm{e}}$re equation \cite{BrennerGudiNeilanSung2011}.
The variational formulation of problem~\eqref{fourthorderpertub} is to find $u \in H^{2}_{0} (\Omega)$ such that
\begin{equation}\label{weak form}
  \varepsilon^2 a(u,v)+b(u,v)=(f,v) \quad \forall ~v \in H^{2}_{0}(\Omega),
\end{equation}
where
\begin{gather*}
a(u,v)=(\nabla^{2}u, \nabla^{2}v), \quad b(u,v)=(\nabla u,\nabla v)
\end{gather*}
with $(\cdot,\cdot)$ being the $L^{2}$ inner product over $\Omega$.

The $H^2$-conforming finite elements are suitable to discretize the fourth order operator and the second order operator in problem~\eqref{fourthorderpertub} simultaneously \cite{Semper1992}, such as the Argyris element \cite{ArgyrisFriedScharpf1968} and its three-dimensional counterpart \cite{Zhang2009a}, Hsieh-Clough-Toucher element \cite{Ciarlet1978}. However the $H^2$-conforming finite elements require higher degree polynomials or macroelement techniques
which are completely not necessary since the boundary layers of problem~\eqref{fourthorderpertub} (cf. \cite[section~5]{NilssenTaiWinther2001}).
To this end, nonconforming finite element methods are more popular for the fourth order elliptic singular perturbation problem.
To be specific, many $H^2$-nonconforming elements which are $H^1$-conforming have been constructed for problem~\eqref{fourthorderpertub} in \cite{NilssenTaiWinther2001, TaiWinther2006, GuzmanLeykekhmanNeilan2012, WangWuXie2013, ChenChenQiao2013, ChenChen2014, XieShiLi2010, ChenChenXiao2014, WangShiXu2007, WangShiXu2007a}. And several $H^1$-nonconforming elements for problem~\eqref{fourthorderpertub} were studied in \cite{ChenZhaoShi2005, ChenLiuQiao2010, WangWuXie2013}. In \cite{BrennerNeilan2011}, the $C^0$ interior penalty discontinuous Galerkin (IPDG) method was devised for problem~\eqref{fourthorderpertub}. With the help of some a posteriori error estimates, the authors analyzed the $C^0$ IPDG method under minimal regularity assumptions. And the $C^0$ IPDG method was reanalyzed for a layer-adapted mesh in \cite{FranzRoosWachtel2014}. As a matter of fact, any other discontinuous Galerkin methods for the fourth order elliptic problem  \cite{EngelGarikipatiHughesLarsonEtAl2002, WellsDung2007, HuangHuangHan2010, HuangLai2011, HuangHuang2014, KarakocNeilan2014, AnHuang2015, HuangHuang2016, BabuvskaZlamal1973, MozolevskiSuliBosing2007, SuliMozolevski2007, BrennerGudiSung2010a,GeorgoulisHouston2009} can be used to design robust discretizations for problem~\eqref{fourthorderpertub}.

It is well-known that the Morley-Wang-Xu \cite{Morley1968, WangXu2006} element has the fewest number of degrees of freedom on each element among the existing conforming and nonconforming finite elements for the fourth order problems.
Unfortunately, it has been proved and demonstrated in \cite{NilssenTaiWinther2001, Wang2001} that the Morley element method is divergent for general second order elliptic problems, which indicates that it is also divergent for problem~\eqref{fourthorderpertub} as $\varepsilon\to0$ (cf. \cite[Table~1]{NilssenTaiWinther2001}). To deal with the divergence of the Morley-Wang-Xu element for the Laplace operator,
Wang and his collaborators modified the bilinear formulation corresponding to the Laplace operator by introducing an interpolation operator \cite{WangXuHu2006, WangMeng2007}. The resulting discrete method possessed the sharp half-order convergence rate for the general values of $\varepsilon$, however, which was a lower order convergent finite element method for Poisson equation when $\varepsilon=0$.
The modified Morley-Wang-Xu method in \cite{WangXuHu2006, WangMeng2007} was proved to be robust with respect to the parameter $\varepsilon$.

In this paper, we present two Morley-Wang-Xu element methods with penalty for the fourth order elliptic singular perturbation problem~\eqref{fourthorderpertub}.
In consideration of the simplicity of the Morley-Wang-Xu element, we still use the Morley-Wang-Xu element to discretize problem~\eqref{fourthorderpertub} as
in \cite{WangXuHu2006, WangMeng2007}. Instead of introducing the interpolation operator, the interior penalty discontinuous Galerkin (IPDG) formulation \cite{Arnold1982} is adopted to dispose of the Laplace operator in our first method.
The consistency of the Morley-Wang-Xu element method for the Laplace operator is overcame by the IPDG formulation.
Then in order to simplify the discrete bilinear formulation, we use the super penalty discontinuous Galerkin formulation \cite{BabuvskaZlamal1973, BrezziManziniMariniPietraEtAl2000} to approximate the bilinear formulation $b(\cdot, \cdot)$ in the second method.
The discrete bilinear formulation of the super penalty Morley-Wang-Xu element method has only one more term than the continuous bilinear formulation $b(\cdot, \cdot)$, i.e. the super penalty term
\[
\sum_{F \in \mathcal{F}_{h}} \frac{1}{h_{F}^{2p+1}}(\llbracket u \rrbracket, \llbracket v \rrbracket)_F.
\]
It's worth mentioning that both two Morley-Wang-Xu element methods with penalty in this paper can be generalized to any dimensions.

Borrowing the ideas in \cite{Gudi2010, Gudi2010a, BrennerNeilan2011}, we first establish some local
lower bound estimates of a posteriori error analysis, based on which the a priori error estimates are
derived under minimal regularity assumptions on the exact solution.
To the best of our knowledge, there are few works on the a posteriori error estimates for the fourth order elliptic singular perturbation problem in the literatures. Reliable and efficient a posteriori error estimators were constructed for the nonconforming finite element methods in \cite{ZhangWang2008}.
And some local lower bound estimates of a posteriori error analysis were proved for the $C^0$ IPDG method in \cite{BrennerNeilan2011}.
Recently, robust residual-based a posteriori estimators were developed for Ciarlet-Raviart mixed finite element method in \cite{DuLinZhang2016}.

The influence of the boundary layers of problem~\eqref{fourthorderpertub} is also considered in the a priori error estimates for the interior penalty Morley-Wang-Xu element method in this paper. Both the Morley-Wang-Xu element methods with penalty converge uniformly with respect to the parameter $\varepsilon$. In consideration of the boundary layers, the robust and sharp half-order convergence rate of the interior penalty Morley-Wang-Xu element method is achieved for the general values of $\varepsilon$.
And we improve the convergence rate in the following two cases: (1) the parameter $\varepsilon$ is bounded both from above and below by positive constants independent of the mesh size $h$; (2) $\varepsilon\lesssim h^{\gamma}$ with $\gamma>1$.

The rest of this paper is organized as follows. In Section 2, we provide some notations and the description of the Morley-Wang-Xu element.
The interior penalty Morley-Wang-Xu element method and the super penalty Morley-Wang-Xu element method are presented and analyzed in Section 3 and Section 4 respectively.
Some numerical results are provided in Section 5.

\section{Preliminaries}

Given a bounded domain $G\subset\mathbb{R}^{d}$ and a
non-negative integer $m$, let $H^m(G)$ be the usual Sobolev space of functions
on $G$. The corresponding norm and semi-norm are denoted respectively by
$\Vert\cdot\Vert_{m,G}$ and $|\cdot|_{m,G}$.   Let $(\cdot, \cdot)_G$ be the standard inner product on $L^2(G)$, $(L^2(G))^d$ or $(L^2(G))^{d\times d}$.
If $G$ is $\Omega$, we abbreviate
$\Vert\cdot\Vert_{m,G}$, $|\cdot|_{m,G}$ and $(\cdot, \cdot)_G$ by $\Vert\cdot\Vert_{m}$, $|\cdot|_{m}$ and $(\cdot, \cdot)$,
respectively. Let $H_0^m(G)$ be the closure of $C_{0}^{\infty}(G)$ with
respect to the norm $\Vert\cdot\Vert_{m,G}$.
Let $P_m(G)$ stand for the set of all
polynomials in $G$ with the total degree no more than $m$. Denote by $P_G^m$ the $L^2$-orthogonal projection
operator onto $P_m(G)$. And the range of $P_G^m$ is defined to be zero when $m$ is a negative integer.
For any finite set $\mathcal{S}$, denote by $\#\mathcal{S}$ the cardinality of $\mathcal{S}$.

Suppose the domain $\Omega$ is subdivided by a family of shape regular simplicial grids $\mathcal {T}_h$  (cf.\ \cite{BrennerScott2008,Ciarlet1978}) with $h:=\max\limits_{K\in \mathcal {T}_h}h_K$
and $h_K:=\mbox{diam}(K)$.
Let $\mathcal{F}_h$ be the union of all $d-1$ dimensional faces
of $\mathcal {T}_h$ and $\mathcal{F}^i_h$ the union of all
interior $d-1$ dimensional faces of the triangulation $\mathcal {T}_h$. For any $F\in\mathcal{F}_h$,
denote by $h_F$ its diameter and fix a unit normal vector $n_F$.
For each $K\in\mathcal{T}_h$, denote by $n_K$ the
unit outward normal to $\partial K$. Without causing any confusion, we will abbreviate $n_K$ as $n$ for simplicity.
Set for any face $F\in\mathcal{F}_h$
\[
\partial^{-1}F :=\{K\in\mathcal{T}_h: F\subset \partial K\}, \quad  \omega_F:=\textrm{interior}\left(\bigcup_{K\in\partial^{-1}F}\bar{K}\right).
\]
Discrete differential operators $\nabla_h$ is defined as
the elementwise counterpart of $\nabla$ with respect to $\mathcal{T}_h$.
Throughout this paper, we also use
``$\lesssim\cdots $" to mean that ``$\leq C\cdots$", where
$C$ is a generic positive constant independent of $h$ and the
parameter $\varepsilon$,
which may take different values at different appearances.

Moreover, we introduce averages and jumps on $d-1$ dimensional faces as in \cite{HuangHuangHan2010}.
Consider two adjacent simplices $K^+$ and $K^-$ sharing an interior face $F$.
Denote by $n^+$ and $n^-$ the unit outward normals
to the common face $F$ of the simplices $K^+$ and $K^-$, respectively.
For a scalar-valued function $v$, write $v^+:=v|_{K^+}$ and $v^-
:=v|_{K^-}$.   Then define the average and jump on $F$ as
follows:
\[
\{v\}:=\frac{1}{2}(v^++v^-),
  \quad  \llbracket v\rrbracket:=v^+n_F\cdot n^++v^-n_F\cdot n^-.
\]
On a face $F$ lying on the boundary $\partial\Omega$, the above terms are
defined by
\[
\{v\}:=v, \quad \llbracket v\rrbracket
   :=vn_F\cdot n.
\]

Define two sets by
\[
\mathcal{C}_1:=\{1,2,\cdots,d+1\},\quad \mathcal{C}_2:=\{\boldsymbol{I}=(i_1,i_2);\;1\le i_1<i_2\le d+1\}.
\]
Then for a $d$-simplex $K$ with $d+1$ vertices $p_i$ $(1\leq i\leq d+1)$, and for $j\in \mathcal{C}_1$ and $\boldsymbol{I}=(i_1,i_2)\in \mathcal{C}_2$, denote by $F_j^{K}$ the $(d-1)$-subsimplex of $K$ without $p_j$ as its vertex, while denote by $F_{\boldsymbol{I}}^{K}$ the $(d-2)$-subsimplex of $K$ without $p_{i_1}$ and $p_{i_2}$ as its vertices. %; write $|F_j^{K}|$ and $|F_{\boldsymbol{I}}^{K}|$ for their measures, respectively. In what follows, we also
As usual, $|F|$ denotes  the measure of a given open set $F$. With the partition $\mathcal{T}_h$, the Morley-Wang-Xu element \cite{WangXu2006, Morley1968, WangXu2013} is defined as follows: for any $K\in\mathcal{T}_h$, the local shape function space $P_{K}$ is $P_2(K)$ and the nodal variables are given by
\begin{equation}\label{eq:mwxdofs}
\mathcal{N}_{K}:=\bigg\{\frac{1}{|F_{\boldsymbol{I}}^{K}|}\int_{F_{\boldsymbol{I}}^{K}}v~ds\quad \forall\,\boldsymbol{I}\in \mathcal{C}_2,
\quad \frac{1}{|F_j^{K}|}\int_{F_j^{K}}\partial_{n}v~ds\quad \forall\, j\in \mathcal{C}_1
\bigg\}.
\end{equation}
Associated with the partition $\mathcal{T}_h$, the global Morley-Wang-Xu element space $V_h$ consists of all piecewise quadratic functions
on $\mathcal{T}_h$ such that, their integral average over each $(d-2)$-dimensional face
of elements in $\mathcal{T}_h$ are continuous, and their normal derivatives are continuous at
the barycentric point of each $(d-1)$-dimensional face of elements in $\mathcal{T}_h$.
%Denote by $V_h$ the global MWX element space with respect to the partition $\mathcal{T}_h$.
And
define
\[
V_{h0} =\left\{v\in V_{h}: \frac{1}{|F_{\boldsymbol{I}}^{K}|}\int_{F_{\boldsymbol{I}}^{K}}v~ds
=\frac{1}{|F_j^{K}|}\int_{F_j^{K}}\partial_{n}v~ds=0\quad \forall
\,F_{\boldsymbol{I}}^{K},\,F_j^{K}\subset\partial\Omega\right\}.
\]

\section{Interior penalty Morley-Wang-Xu element method}

In this section,  we will devise an interior penalty Morley-Wang-Xu element method for problem~\eqref{fourthorderpertub}
by using the interior penalty discontinuous Galerkin formulation to approximate the bilinear formulation corresponding to the Laplace operator.

%\subsection{Discrete method}
Set discrete bilinear forms
\begin{align*}
  a_{h}(w,v):= & \sum_{K \in \mathcal {T}_h}(\nabla^{2}w, \nabla^{2}v)_K \quad \forall~w, v \in H^{2}(\Omega)+V_{h}, \\
  b_{h}(w,v):= & \sum_{K \in \mathcal {T}_h}(\nabla w, \nabla v)_K - \sum_{F \in \mathcal{F}_{h}}(\{\partial_{n_F} w\} , \llbracket v \rrbracket)_F  - \sum_{F \in \mathcal{F}_{h}}(\{\partial_{n_F} v\} , \llbracket w \rrbracket)_F \\
  & + \sum_{F \in \mathcal{F}_{h}} \frac{\sigma}{h_{F}}(\llbracket w \rrbracket, \llbracket v \rrbracket)_F \quad \forall~ w, v\in H^{1}(\Omega)+V_{h},
\end{align*}
where $\sigma$ is a positive real number.
Then the interior penalty Morley-Wang-Xu (IPMWX) element method for problem~\eqref{fourthorderpertub}
is to find $u_h \in V_{h0} $ such that
\begin{equation}\label{ipdgmwx}
  \varepsilon^2 a_h(u_h,v_h)+b_h(u_h,v_h)=(f,v_h) \quad \forall~v_h \in V_{h0}.
\end{equation}

\begin{remark}\rm
Apart form the interior penalty discontinuous Galerkin (IPDG) formulation \cite{BrezziManziniMariniPietraEtAl2000, BrezziManziniMariniPietraEtAl1999, ArnoldBrezziCockburnMarini2001, PerairePersson2008}, many other discontinuous Galerkin formulations \cite{CockburnKarniadakisShu2000}
can be also used to tackle the divergence of the Morley element for the Laplace operator, such as local discontinuous Galerkin method~\cite{CockburnShu1998}, compact discontinuous Galerkin method~\cite{PerairePersson2008}, weakly over-penalized symmetric interior penalty method~\cite{BrennerOwensSung2012}, etc. %hybridizable discontinuous Galerkin method \cite{CockburnDongGuzman2008}, weak Galerkin method \cite{WangYe2013},
Among all of these discontinuous Galerkin methods, the IPDG method is one of the simplest formulations to discretize the Laplace operator, and it possesses the compact stencil in the sense that only
the degrees of freedom belonging to neighboring elements are connected.
\end{remark}

For any $w\in H^1(\Omega)+V_h$, define some discrete norms
\begin{gather*}
 |w|_{1,h}^2:=\sum_{K \in \mathcal {T}_h} |w|_{1, K}^2 , \quad |w|_{2,h}^2:=\sum_{K \in \mathcal {T}_h} |w|_{2, K}^2 ,\\
  \interleave w \interleave^2:=|w|_{1,h}^2+\sum_{F \in \mathcal{F}_{h}} h_F^{-1} \| \llbracket w \rrbracket \|_{0,F}^2 ,\quad
  \| w \|_{\varepsilon ,h}^2:=\varepsilon^2 |w|_{2,h}^2+\interleave w \interleave^2 .
\end{gather*}
It follows from \cite{Arnold1982}
\[
b_h(w_h, v_h)\lesssim \interleave w_h \interleave\interleave v_h \interleave, \quad \interleave w_h \interleave^2\lesssim b_h(w_h, w_h)\quad\forall~w_h, v_h\in V_h,
\]
when $\sigma$ is large enough.
Hence we have
\[
\varepsilon^2a_h(w_h, v_h)+b_h(w_h, v_h)\lesssim  \| w_h \|_{\varepsilon ,h}\| v_h\|_{\varepsilon ,h} \quad\forall~w_h, v_h\in V_h,
\]
 \begin{equation}\label{eq:coercivity}
 \| w_h \|_{\varepsilon ,h}^2\lesssim \varepsilon^2 a_h(w_h,w_h)+b_h(w_h,w_h)\ \ \forall~w_h \in V_{h}.
\end{equation}
The wellposedness of the IPMWX method~\eqref{ipdgmwx} follows immediately from \eqref{eq:coercivity}.
%\subsection{Error analysis}

To derive the error estimate under minimal regularity of the exact solution $u$,
we first recall some a posteriori error estimates \cite{BrennerNeilan2011}.
\begin{lemma}
Let $u\in H_0^2(\Omega)$ be the solution of problem~\eqref{fourthorderpertub}. Then we have for any $w_h\in V_{h0}$, $K \in \mathcal {T}_h$, and $F \in \mathcal{F}_{h}^{i}$
\begin{align}
\|f+\Delta w_h\|_{0, K} \lesssim &\varepsilon^2 h_K^{-2}|u-w_h|_{2, K}+h_{K}^{-1}|u-w_h|_{1, K} + \|f-Q_K^rf\|_{0, K},
\label{eq:posterror1}\\
\| \llbracket \partial_{nn}^2 w_h \rrbracket \|_{0, F}^{2}\lesssim &\sum_{K \in \partial^{-1}F} \Big( h^{-1}_K |u-w_h|^{2}_{2, K}+ \varepsilon^{-4} h_K|u-w_h|^{2}_{1,K} \notag\\
&\quad\quad\quad\quad\quad+\varepsilon^{-4} h_{K}^{3}\|f-Q_K^rf\|_{0, K}^2 \Big),
\label{eq:posterror2}\\
\|\llbracket \partial_{n_F} w_h \rrbracket \|_{0, F}^{2}\lesssim & \sum_{K \in \partial^{-1}F} \Big(\varepsilon^4 h_{K}^{-3}|u-w_h|_{2, K}^2+h_{K}^{-1}|u-w_h|_{1, K}^2 \notag\\
&\quad\quad\quad\quad\quad+h_{K}\|f-Q_K^rf\|_{0, K}^2 \Big), \label{eq:posterror3}
\end{align}
where $Q_K^r$ is defined as the $L^2$-orthogonal projection operator onto $P_r(K)$ with any non-negative integer r.
\end{lemma}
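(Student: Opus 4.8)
These are the local lower bound (``efficiency'') counterparts of residual‑type a posteriori estimators, so the plan is to prove each by the bubble‑function technique of Verf\"urth, adapted to the singularly perturbed fourth order operator as in \cite{Gudi2010,BrennerNeilan2011}. The only structural facts used are that $w_h|_K\in P_2(K)$ (so $\Delta w_h$, $\nabla^2 w_h$, $\partial_{nn}^2 w_h$ are constant on each $K$) and that $u$ solves \eqref{weak form}; the restriction of \eqref{eq:posterror2}--\eqref{eq:posterror3} to interior faces reflects that the test functions built below must vanish, together with their gradients, on $\partial\Omega$.

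For \eqref{eq:posterror1} I would take the interior bubble $b_K$ and set $v_K:=(Q_K^rf+\Delta w_h)b_K^2\in H^2_0(K)\subset H^2_0(\Omega)$, so that $\|Q_K^rf+\Delta w_h\|_{0,K}^2\lesssim(Q_K^rf+\Delta w_h,v_K)_K$ by equivalence of norms on polynomials of bounded degree. Splitting $Q_K^rf+\Delta w_h=(Q_K^rf-f)+(f+\Delta w_h)$, using \eqref{weak form} with test function $v_K$ to rewrite $(f,v_K)_K=\varepsilon^2 a(u,v_K)+b(u,v_K)$, integrating the $\Delta w_h$‑term by parts, and using that $\nabla^2 w_h$ is constant while $\int_K\nabla^2 v_K=0$, one is left with $(Q_K^rf-f,v_K)_K+\varepsilon^2(\nabla^2(u-w_h),\nabla^2 v_K)_K+(\nabla(u-w_h),\nabla v_K)_K$; Cauchy--Schwarz, the inverse estimates $|v_K|_{j,K}\lesssim h_K^{-j}\|v_K\|_{0,K}\le h_K^{-j}\|Q_K^rf+\Delta w_h\|_{0,K}$, division by $\|Q_K^rf+\Delta w_h\|_{0,K}$, and the triangle inequality $\|f+\Delta w_h\|_{0,K}\le\|f-Q_K^rf\|_{0,K}+\|Q_K^rf+\Delta w_h\|_{0,K}$ then give \eqref{eq:posterror1}.

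For \eqref{eq:posterror2} and \eqref{eq:posterror3} I would use a patch bubble $v_F\in H^2_0(\omega_F)\subset H^2_0(\Omega)$: for \eqref{eq:posterror2} with $v_F|_F=0$ and $\partial_{n_F}v_F|_F$ proportional to a face bubble on $F$, and for \eqref{eq:posterror3} with $v_F|_F$ proportional to a polynomial extension of $\llbracket\partial_{n_F}w_h\rrbracket$ times a face bubble. After the norm‑equivalence reduction ($\varepsilon^2\|\llbracket\partial_{nn}^2 w_h\rrbracket\|_{0,F}^2\lesssim\varepsilon^2(\llbracket\partial_{nn}^2 w_h\rrbracket,\partial_{n_F}v_F)_F$, resp.\ $\|\llbracket\partial_{n_F}w_h\rrbracket\|_{0,F}^2\lesssim(\llbracket\partial_{n_F}w_h\rrbracket,v_F)_F$), integrating the pairing by parts over $\partial^{-1}F=\{K^+,K^-\}$, using \eqref{weak form} to express the exact‑solution pairings through $f$, integrating the $\Delta w_h$‑coefficient by parts, and using $\llbracket u\rrbracket=\llbracket\partial_{n_F}u\rrbracket=0$, one arrives in both cases at
\[
\mathcal{L}_F\ \lesssim\ \sum_{K\in\partial^{-1}F}\bigl[\,\varepsilon^2(\nabla^2(u-w_h),\nabla^2 v_F)_K+(f+\Delta w_h,v_F)_K+(\nabla(u-w_h),\nabla v_F)_K\,\bigr]+\mathcal{R}_F ,
\]
with $\mathcal{L}_F=\varepsilon^2\|\llbracket\partial_{nn}^2 w_h\rrbracket\|_{0,F}^2$ and $\mathcal{R}_F=0$ for \eqref{eq:posterror2}, and $\mathcal{L}_F=\|\llbracket\partial_{n_F}w_h\rrbracket\|_{0,F}^2$ and $\mathcal{R}_F=\varepsilon^2\int_F\nabla v_F\cdot\llbracket\nabla^2 w_h\,n_F\rrbracket$ for \eqref{eq:posterror3} (the term $\varepsilon^2\sum_K(\nabla^2 w_h,\nabla^2 v_F)_K$ having been turned, because $\nabla^2 w_h$ is elementwise constant, into an integral over $F$ that vanishes when $v_F|_F=0$). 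Bounding the right‑hand side by Cauchy--Schwarz, the scaled estimates $\|v_F\|_{0,K}\lesssim h_F^{3/2}\|\llbracket\partial_{nn}^2 w_h\rrbracket\|_{0,F}$ (for \eqref{eq:posterror2}), $\|v_F\|_{0,K}\lesssim h_F^{1/2}\|\llbracket\partial_{n_F}w_h\rrbracket\|_{0,F}$ (for \eqref{eq:posterror3}) and $|v_F|_{j,K}\lesssim h_K^{-j}\|v_F\|_{0,K}$, then inserting \eqref{eq:posterror1} for $\|f+\Delta w_h\|_{0,K}$, dividing through and squaring yields \eqref{eq:posterror2}. For \eqref{eq:posterror3} the nuisance term $\mathcal{R}_F$ has normal part $\varepsilon^2\int_F\partial_{n_F}v_F\,\llbracket\partial_{nn}^2 w_h\rrbracket$, bounded by the already‑proved \eqref{eq:posterror2}, and tangential part $\lesssim\varepsilon^2 h_F^{-2}\|\llbracket\partial_{n_F}w_h\rrbracket\|_{0,F}^2$ (using $\llbracket\partial_{n\tau}^2 w_h\rrbracket=\partial_\tau\llbracket\partial_{n_F}w_h\rrbracket$ and an inverse inequality on $P_1(F)$); the latter is absorbed into $\mathcal{L}_F$ when $\varepsilon\lesssim h_F$, while for $\varepsilon\gtrsim h_F$ one instead bounds $\|\llbracket\partial_{n_F}w_h\rrbracket\|_{0,F}=\|\llbracket\partial_{n_F}(w_h-u)\rrbracket\|_{0,F}$ by a trace inequality, which is already of the required form since $h_K\lesssim\varepsilon^4 h_K^{-3}$ in that regime.

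The matching of the powers of $h$ and $\varepsilon$ is routine once these identities are set up. The genuine obstacle is the construction of the patch bubble $v_F$: the naive face bubble (the product of the barycentric coordinates of the vertices of $F$, extended into $K^\pm$) is only $C^0$ across $F$ — its normal derivative jumps — hence does not lie in $H^2(\omega_F)$ and may not be used as a test function in \eqref{weak form}. One must instead take $v_F$ to be the pullback of a fixed reference bubble on the two‑simplex reference patch that is $C^1$ (or smooth) across the interface and vanishes to second order on the outer boundary, with all constants controlled by shape regularity. This, together with the case distinction between $\varepsilon\lesssim h_F$ and $\varepsilon\gtrsim h_F$ in \eqref{eq:posterror3}, is where the care is required.
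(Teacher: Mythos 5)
Your overall strategy is the one the paper follows --- element and face bubbles, testing \eqref{weak form} with them, feeding \eqref{eq:posterror1} into \eqref{eq:posterror2} and then \eqref{eq:posterror2} into \eqref{eq:posterror3} --- and your treatment of \eqref{eq:posterror1} and of the scaling bookkeeping is fine. But the step you yourself single out as the crux, the construction of the $H_0^2(\omega_F)$ face bubble, is left with a genuine gap: the remedy you propose (pulling back a fixed $C^1$ bubble on a two-simplex reference patch) does not work, because the patch map is only piecewise affine, and a piecewise affine change of variables destroys continuity of the normal derivative across $F$. Concretely, with $D\Phi_\pm$ the two affine Jacobians one has $\nabla(\hat v\circ\Phi_\pm^{-1})=D\Phi_\pm^{-T}\hat\nabla\hat v$, and since $D\Phi_+\neq D\Phi_-$ off the tangent space of $F$, the pulled-back normal derivatives from the two sides differ in general: for \eqref{eq:posterror3} you need a nonzero trace on $F$, which forces a nonzero tangential reference gradient and hence a jump of the physical normal derivative, and even for the zero-trace bubble needed in \eqref{eq:posterror2} the normal derivative acquires side-dependent scaling factors. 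So the test functions you describe are in general not in $H^2(\omega_F)$ and may not be inserted into \eqref{weak form}. The paper's fix is explicit and simpler: take $b_F$ to be the product of the barycentric coordinates of $K^+$ and $K^-$ associated with the vertices of $F$ (six affine factors when $d=3$), which is a single polynomial on $\omega_F$, hence smooth across $F$, with $b_F^2$ vanishing to second order on $\partial\omega_F$; then set $\psi_F=b_F^2\widetilde J_{2,F}$ for \eqref{eq:posterror3} and $\phi_F=(n_F\cdot x+C_1)b_F^2\widetilde J_{1,F}$ for \eqref{eq:posterror2}, where $\widetilde J_{i,F}$ extends the jump constantly along the normal and $n_F\cdot x+C_1=0$ is the plane containing $F$. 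This yields exactly the traces you postulate ($\phi_F=0$ and $\partial_{n_F}\phi_F=b_F^2 J_{1,F}$ on $F$, cf. \eqref{eq:phiF1}) inside $H_0^2(\omega_F)$, with the bounds \eqref{eq:phiF2} and \eqref{eq:psiF} following by scaling.

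A secondary, non-fatal difference: in \eqref{eq:posterror3} you carry a tangential nuisance term and dispose of it with an inverse inequality on $F$ plus a case split $\varepsilon\lesssim h_F$ versus $\varepsilon\gtrsim h_F$ (direct trace inequality in the latter regime). That workaround looks sound, but it is unnecessary: since $\nabla^2 w_h$ is elementwise constant and $\psi_F$ vanishes on all edges of $K^\pm$ (in particular on $\partial F$), a tangential integration by parts over $F$ annihilates the tangential contribution exactly, so the extra term is precisely $\varepsilon^2(\llbracket\partial_{nn}^2 w_h\rrbracket,\partial_{n_F}\psi_F)_F$, which is then bounded by \eqref{eq:posterror2} with no case distinction --- this is how the paper proceeds.
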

\begin{proof}
Estimates \eqref{eq:posterror1}-\eqref{eq:posterror3} has been proved in \cite{BrennerNeilan2011} for $d=2$. Here we will focus on these a posteriori error estimates for $d=3$. We can readily derive \eqref{eq:posterror1} in three dimensions as in \cite{BrennerNeilan2011} by standard bubble function argument.
Next we will prove \eqref{eq:posterror2}-\eqref{eq:posterror3} by using bubbles functions in \cite{AnHuang2015} rather than those in \cite{BrennerNeilan2011}.

Let $K_1$ and $K_2$ be two elements in $\mathcal{T}_h$ sharing the common face $F$. With $F$, we associate a face bubble function given by (cf. \cite{Verfurth1996, HuangHuangHan2010})
\[
b_F:=\left\{
\begin{array}{ll}
\lambda_{K_1,1}\lambda_{K_1,2}\lambda_{K_1,3}\lambda_{K_2,1}\lambda_{K_2,2}\lambda_{K_2,3}& \textrm{  in  } \omega_F, \\
0 &\textrm{  in  } \Omega\backslash \omega_F,
\end{array}
\right.
\]
where $\lambda_{K_1,i}$ and $\lambda_{K_2,i}$ for $i=1, 2, 3$ are barycentric coordinates of $K_1$ and $K_2$ associated with three vertices of $F$, respectively.
Suppose the plane face $F$ lying in is governed by $n_F\cdot x+C_1=0$ where $C_1$ is a constant.
By direct manipulation, we readily get
\begin{equation}\label{eq:temp170703-1}
\left|n_F\cdot x+C_1\right|\lesssim h_F \quad \textrm{ in } \omega_F.
\end{equation}
Set $J_{1,F}:=\llbracket \partial_{nn}^2 w_h \rrbracket|_F$. And $\widetilde{J}_{1,F}$ is defined by extending the jump $J_{1,F}$ to $\omega_F$  constantly along the normal to $F$. By the inverse inequality, it holds for each $K\in\partial^{-1}F$
\begin{equation}\label{eq:temp170703-2}
\|\widetilde{J}_{1,F}\|_{0,K}\lesssim h_K^{3/2}\|\widetilde{J}_{1,F}\|_{L^{\infty}(K)}=h_K^{3/2}\|J_{1,F}\|_{L^{\infty}(F)}\lesssim h_F^{1/2}\|J_{1,F}\|_{0,F}.
\end{equation}
Take $\phi_F:=(n_F\cdot x+C_1)b_F^2\widetilde{J}_{1,F}\in H_0^2(\omega_F)$. Then we get from \eqref{eq:temp170703-1}-\eqref{eq:temp170703-2}
\begin{equation}\label{eq:phiF1}
\phi_F=0, \quad \partial_{n_F}\phi_F=b_F^2\widetilde{J}_{1,F}\partial_{n_F}(n_F\cdot x+C_1)=b_F^2\widetilde{J}_{1,F}=b_F^2J_{1,F}
\quad\textrm{ on } F,
\end{equation}
\begin{equation}\label{eq:phiF2}
\sum_{K\in\partial^{-1}F}\|\phi_F\|_{0,K}\lesssim h_F\sum_{K\in\partial^{-1}F}\|\widetilde{J}_{1,F}\|_{0,K}\lesssim h_F^{3/2}\|J_{1,F}\|_{0,F}.
\end{equation}
According to standard scaling argument, \eqref{eq:phiF1} and the fact that $\phi_F=0$ on each edge of $K_1$ and $K_2$,
\begin{align*}
\|J_{1,F}\|_{0,F}^2\lesssim & (J_{1,F}, \partial_{n_F}\phi_F)_F = (\llbracket \partial_{nn}^2 w_h \rrbracket, \partial_{n_F}\phi_F)_F \\
=&\sum_{K\in\partial^{-1}F}(\partial_{nn}^2 w_h, \partial_{n}\phi_F)_{\partial K}=
\sum_{K\in\partial^{-1}F}((\nabla^2w_h)n, \nabla\phi_F)_{0, \partial K}.
\end{align*}
Applying integration by parts, \eqref{eq:phiF1} and \eqref{weak form}, it follows that
\begin{align*}
\varepsilon^2\|J_{1,F}\|_{0,F}^2\lesssim & \varepsilon^2\sum_{K\in\partial^{-1}F}(\nabla^2w_h, \nabla^2\phi_F)_{0, K} \\
=&\varepsilon^2\sum_{K\in\partial^{-1}F}(\nabla^2(w_h-u), \nabla^2\phi_F)_{0, K} + \sum_{K\in\partial^{-1}F}(\nabla(w_h-u), \nabla\phi_F)_{0, K} \\
& + \sum_{K\in\partial^{-1}F}(f+\Delta w_h, \phi_F)_{0, K}.
\end{align*}
Then from Cauchy-Schwarz inequality, \eqref{eq:posterror1} and inverse inequality, we get
\begin{align*}
\varepsilon^2\|J_{1,F}\|_{0,F}^2\lesssim & \sum_{K\in\partial^{-1}F}\left(\varepsilon^2|u-w_h|_{2,K}|\phi_F|_{2,K} + |u-w_h|_{1,K}|\phi_F|_{1,K}\right) \\
&+\sum_{K\in\partial^{-1}F}\|f+\Delta w_h\|_{0,K}\|\phi_F\|_{0,K} \\
\lesssim & \sum_{K\in\partial^{-1}F}\left(\varepsilon^2 h_K^{-2}|u-w_h|_{2, K}+h_{K}^{-1}|u-w_h|_{1, K}\right)\|\phi_F\|_{0,K} \\
& +\sum_{K\in\partial^{-1}F}\|f-Q_K^rf\|_{0,K}\|\phi_F\|_{0,K} ,
\end{align*}
which together with \eqref{eq:phiF2} implies \eqref{eq:posterror2}.

Let $J_{2,F}:=\llbracket \partial_{n_F} w_h \rrbracket|_F$. And $\widetilde{J}_{2,F}$ is defined by extending the jump $J_{2,F}$ to $\omega_F$  constantly along the normal to $F$. Set $\psi_F:=b_F^2\widetilde{J}_{2,F}\in H_0^2(\omega_F)$.  It is easy to check that
\begin{equation}\label{eq:psiF}
\|\psi_F\|_{0,\omega_F}\lesssim h_F^{1/2}\|J_{2,F}\|_{0,F}.
\end{equation}
By standard scaling argument, integration by parts and \eqref{weak form}, it follows that
\begin{align}
\|J_{2,F}\|_{0,F}^2\lesssim & (\llbracket \partial_{n_F} w_h \rrbracket, \psi_F)_{0,F}
=\sum_{K\in\partial^{-1}F}(\partial_{n} w_h,\psi_F)_{\partial K} \notag\\
=& \sum_{K\in\partial^{-1}F}(\nabla w_h, \nabla\psi_F)_{K} + \sum_{K\in\partial^{-1}F}(\Delta w_h,\psi_F)_{K} \notag\\
=& \sum_{K\in\partial^{-1}F}(\nabla (w_h-u), \nabla\psi_F)_{K} + \sum_{K\in\partial^{-1}F}(f+\Delta w_h,\psi_F)_{K} \notag\\
&+ \varepsilon^2\sum_{K\in\partial^{-1}F}(\nabla^2 (w_h-u), \nabla^2\psi_F)_{K} - \varepsilon^2\sum_{K\in\partial^{-1}F}(\nabla^2 w_h, \nabla^2\psi_F)_{K}.\label{eq:temp1}
\end{align}
Noting that $\psi_F=0$ on each edge of $K$ again, we obtain
\[
(\nabla^2 w_h, \nabla^2\psi_F)_{K}=((\nabla^2 w_h)n, \nabla\psi_F)_{\partial K}=(\partial_{nn}^2 w_h, \partial_n\psi_F)_{\partial K}.
\]
Thus
\[
\sum_{K\in\partial^{-1}F}(\nabla^2 w_h, \nabla^2\psi_F)_{K}=(\llbracket\partial_{nn}^2 w_h\rrbracket, \partial_{n_F}\psi_F)_{F}.
\]
Therefore \eqref{eq:posterror3} will be achieved from \eqref{eq:temp1}, \eqref{eq:psiF} and \eqref{eq:posterror1}-\eqref{eq:posterror2}.
\end{proof}

We also need the enriching operator $E_h$ from $V_{h0}$ to some conforming finite element space $V_{h0}^C\subset H_0^2(\Omega)$.
Here we adopt Argyris-Zhang element space \cite{ArgyrisFriedScharpf1968, Zhang2009a} for $V_{h0}^C$ ($5$th Argyris-Zhang element for $d=2$ and $10$th Argyris-Zhang element for $d=3$).
For each $w_h\in V_{h0}$, define $E_hw_h\in V_{h0}^C$ such that for any degree of freedom $N$ located in the interior of $\Omega$,
\[
N(E_hw_h)=\frac{1}{\#\mathcal{T}_N}\sum_{K\in\mathcal{T}_N}N(w_h|_K),
\]
where $\mathcal{T}_N\subset\mathcal{T}_h$ denotes the set of simplices sharing the degree of freedom $N$.
By technique used in \cite{BrennerNeilan2011} and the weak continuity of Morley-Wang-Xu element \cite{WangXu2006}, we have for any $w_h, v_h \in V_{h0}$ and $K\in\mathcal{T}_h$
\begin{equation}\label{eq:Ehprop1}
((\nabla^2w_h)n, \nabla(v_h-E_hv_h))_{0, \partial K}=(\partial_{nn}^2w_h, \partial_n(v_h-E_hv_h))_{0, \partial K},
\end{equation}
\begin{align}
\sum_{K \in \mathcal{T}_{h}} h^{-4}_{K} \|w_h-E_hw_h\|^{2}_{0, K}+ \sum_{K \in \mathcal{T}_{h}} h^{-3}_{K} \|w_h-E_hw_h\|^{2}_{0, \partial K} & \notag\\
+ \sum_{K \in \mathcal{T}_{h}} h^{-1}_{K} \|\partial_{n}(w_h-E_hw_h)\|^{2}_{0, \partial K} & \lesssim |w_h|_{2,h}^{2}, \label{eq:Ehestimate1}
\end{align}
\begin{align}
\sum_{K \in \mathcal{T}_{h}} h^{-2}_{K} \|w_h-E_hw_h\|^{2}_{0, K}+ \sum_{K \in \mathcal{T}_{h}} h^{-1}_{K} \|w_h-E_hw_h\|^{2}_{0, \partial K} & \notag\\
+ \sum_{K \in \mathcal{T}_{h}} h_{K} \|\partial_{n}(w_h-E_hw_h)\|^{2}_{0,\partial K} & \lesssim  |w_h|_{1,h}^2,\label{eq:Ehestimate2}
\end{align}
\begin{gather}
 \|E_{h}w_h\|_{\varepsilon,h} \lesssim \|w_h\|_{\varepsilon,h}. \label{eq:Ehestimate3}
\end{gather}

\begin{lemma}
Let $u\in H_0^2(\Omega)$ be the solution of problem~\eqref{fourthorderpertub}. It holds for any $w_h, v_h\in V_{h0}$
\begin{align}
&(f, v_h-E_hv_h)-\varepsilon^2 a_h(w_h, v_h-E_hv_h) - b_h(w_h, v_h-E_hv_h) \notag\\
\lesssim &\left(\|u-w_h\|_{\varepsilon, h} + \mathrm{osc}_h(f)\right)\|v_h\|_{\varepsilon, h}, \label{eq:nonconformestimate}
\end{align}
where $\mathrm{osc}_h^2(f):=\sum\limits_{K\in\mathcal{T}_h}h_K^2\|f-Q_K^rf\|_{0, K}^2$.
\end{lemma}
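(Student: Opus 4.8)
The plan is to rewrite the left-hand side of \eqref{eq:nonconformestimate} by inserting the exact solution $u$ through the variational formulation \eqref{weak form} and then estimating the resulting terms with the a posteriori bounds of the first lemma together with the enriching-operator estimates \eqref{eq:Ehprop1}--\eqref{eq:Ehestimate3}. First I would set $\phi_h:=v_h-E_hv_h$ and note that, since $E_hv_h\in V_{h0}^C\subset H_0^2(\Omega)$, we may test \eqref{weak form} with $E_hv_h$ to get $(f,E_hv_h)=\varepsilon^2 a(u,E_hv_h)+b(u,E_hv_h)=\varepsilon^2 a_h(u,E_hv_h)+b_h(u,E_hv_h)$ (the jump terms in $b_h$ vanish because $E_hv_h$ is conforming and $\partial_n E_hv_h$, $\llbracket E_hv_h\rrbracket$ are zero on faces). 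Hence the left-hand side equals $(f,v_h)-\varepsilon^2 a_h(u,E_hv_h)-b_h(u,E_hv_h)-\varepsilon^2 a_h(w_h,\phi_h)-b_h(w_h,\phi_h)$. The cleaner route, which I would actually follow, is: the left-hand side equals $(f,\phi_h)-\varepsilon^2 a_h(w_h,\phi_h)-b_h(w_h,\phi_h)$, and I integrate by parts elementwise in the $a_h$ and $b_h$ terms to move all derivatives off $w_h$ onto $\phi_h$, producing volume residuals $(f+\Delta w_h,\phi_h)_K$ (after using $\varepsilon^2\Delta^2 w_h=0$ elementwise since $w_h$ is piecewise quadratic, so only the $-\Delta w_h$ survives from the bulk) and face terms involving $\llbracket \partial_{nn}^2 w_h\rrbracket$, $\llbracket \partial_{n_F}w_h\rrbracket$, averages against $\llbracket\phi_h\rrbracket$ and $\partial_{n_F}\phi_h$, plus the penalty term $h_F^{-1}\llbracket w_h\rrbracket$ against $\llbracket\phi_h\rrbracket$. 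Crucially \eqref{eq:Ehprop1} lets me replace $(\nabla^2 w_h)n\cdot\nabla\phi_h$ on $\partial K$ by $\partial_{nn}^2 w_h\,\partial_n\phi_h$, so the only second-order face contribution is through the jump $\llbracket\partial_{nn}^2 w_h\rrbracket$.

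Next I would bound each piece by Cauchy--Schwarz on each element/face and absorb the $\phi_h$-factors using \eqref{eq:Ehestimate1}--\eqref{eq:Ehestimate2}: the volume term $\sum_K(f+\Delta w_h,\phi_h)_K$ is controlled by $\sum_K\|f+\Delta w_h\|_{0,K}\,h_K^{2}\cdot h_K^{-2}\|\phi_h\|_{0,K}$, where \eqref{eq:posterror1} handles the first factor and $\sum_K h_K^{-4}\|\phi_h\|_{0,K}^2\lesssim|v_h|_{2,h}^2$ (or the $h_K^{-2}$ version giving $|v_h|_{1,h}^2$) handles the second; one has to balance powers of $\varepsilon$ and $h_K$ carefully so that the $\varepsilon^2 h_K^{-2}|u-w_h|_{2,K}$ and $h_K^{-1}|u-w_h|_{1,K}$ contributions each pair with the right negative power of $h_K$ coming from $\phi_h$ to reproduce $\|u-w_h\|_{\varepsilon,h}$ and $\mathrm{osc}_h(f)$. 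For the $\varepsilon^2$ face term $\sum_F(\llbracket\partial_{nn}^2 w_h\rrbracket,\partial_{n_F}\phi_h)_F$ I use \eqref{eq:posterror2} for $\|\llbracket\partial_{nn}^2 w_h\rrbracket\|_{0,F}$ and $\sum_K h_K^{-1}\|\partial_n\phi_h\|_{0,\partial K}^2\lesssim|v_h|_{2,h}^2$ from \eqref{eq:Ehestimate1}; the $\varepsilon^2$ prefactor times $\varepsilon^{-4}$-weighted terms in \eqref{eq:posterror2} collapses to $\varepsilon^{-2}$-weighted quantities that, after the $h_K$ powers are tracked, give $\|u-w_h\|_{\varepsilon,h}+\mathrm{osc}_h(f)$ times $\varepsilon|v_h|_{2,h}$. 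The first-order face term with $\llbracket\partial_{n_F}w_h\rrbracket$ against $\{\phi_h\}$-type factors is handled by \eqref{eq:posterror3} and the $h_K\|\phi_h\|_{0,\partial K}$ or $h_K^{-1}\|\phi_h\|_{0,\partial K}^2$ bounds in \eqref{eq:Ehestimate2}. Finally the penalty term $\sum_F h_F^{-1}(\llbracket w_h\rrbracket,\llbracket\phi_h\rrbracket)_F$: since $\llbracket E_hv_h\rrbracket=0$ and $\llbracket u\rrbracket=0$, we have $\llbracket w_h\rrbracket=\llbracket w_h-u\rrbracket$, so this term is $\lesssim\interleave u-w_h\interleave\cdot\big(\sum_F h_F^{-1}\|\llbracket\phi_h\rrbracket\|_{0,F}^2\big)^{1/2}$, and $\llbracket\phi_h\rrbracket=\llbracket v_h\rrbracket$ gives exactly the penalty part of $\interleave v_h\interleave\le\|v_h\|_{\varepsilon,h}$.

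The main obstacle is the bookkeeping of the $\varepsilon$-$h$ weights: the a posteriori estimates \eqref{eq:posterror2}--\eqref{eq:posterror3} carry $\varepsilon^{-4}$ and $\varepsilon^{4}$ factors on terms of different $h_K$-homogeneity, and after multiplying by the $\varepsilon^2$ (or $1$) prefactors from $a_h$ (or $b_h$) and pairing with the correct negative $h_K$-powers from \eqref{eq:Ehestimate1}--\eqref{eq:Ehestimate2}, everything must telescope into precisely $\varepsilon^2|u-w_h|_{2,h}^2$, $\interleave u-w_h\interleave^2$, and $\mathrm{osc}_h^2(f)$ — no stray powers of $\varepsilon$ or $h$. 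I would organize this as a short table of exponents before writing the chain of inequalities. The rest — elementwise integration by parts, the appeal to \eqref{eq:Ehprop1}, and the final application of Cauchy--Schwarz in the summation over $K$ and $F$ followed by \eqref{eq:Ehestimate3} if needed — is routine.
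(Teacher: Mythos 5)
Your proposal follows essentially the same route as the paper's proof: elementwise integration by parts, reduction of the $a_h$ term to the jump $\llbracket\partial_{nn}^2 w_h\rrbracket$ via \eqref{eq:Ehprop1}, bounding the volume residual $(f+\Delta w_h,\cdot)_K$ and the face residuals with \eqref{eq:posterror1}--\eqref{eq:posterror3} paired against the weighted norms of $v_h-E_hv_h$ from \eqref{eq:Ehestimate1}--\eqref{eq:Ehestimate2}, and treating the consistency and penalty face terms through $\llbracket u\rrbracket=0$. The one detail to add is that eliminating the $(\{\partial_{nn}^2 w_h\},\llbracket\partial_{n_F}(v_h-E_hv_h)\rrbracket)_F$ contribution requires the weak continuity of the Morley--Wang--Xu normal derivative (the face mean of $\llbracket\partial_{n_F}v_h\rrbracket$ vanishes while $\{\partial_{nn}^2 w_h\}$ is facewise constant), not \eqref{eq:Ehprop1} alone.
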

\begin{proof}
Applying integration by parts, we get from \eqref{eq:Ehprop1} and the weak continuity of $\partial_{n_F}v_h$
\begin{align*}
a_h(w_h, v_h-E_hv_h)=&\sum_{K\in\mathcal{T}_h}((\nabla^2w_h)n, \nabla(v_h-E_hv_h))_{\partial K} \\
=&\sum_{K\in\mathcal{T}_h}(\partial_{nn}^2w_h, \partial_n(v_h-E_hv_h))_{\partial K}\\
=&\sum_{F\in\mathcal{F}_h^i}(\llbracket\partial_{nn}^2w_h\rrbracket, \{\partial_{n_F}(v_h-E_hv_h)\})_F.
\end{align*}
Using Cauchy-Schwarz inequality and \eqref{eq:posterror2}, it holds
\begin{align*}
&-\varepsilon^2\left(\llbracket\partial_{nn}^2w_h\rrbracket, \{\partial_{n_F}(v_h-E_hv_h)\}\right)_F\leq \varepsilon^2\|\llbracket\partial_{nn}^2w_h\rrbracket\|_{0,F}\|\{\partial_{n_F}(v_h-E_hv_h)\}\|_{0,F} \\
\lesssim & \sum_{K \in \partial^{-1}F} \varepsilon^2  |u-w_h|_{2, K}h^{-1/2}_K\|\{\partial_{n_F}(v_h-E_hv_h)\}\|_{0,F}\\
 & +\sum_{K \in \partial^{-1}F} \Big(|u-w_h|_{1,K}+h_{K}\|f-Q_K^rf\|_{0, K} \Big)h_{K}^{1/2}\|\{\partial_{n_F}(v_h-E_hv_h)\}\|_{0,F}.
\end{align*}
Then we get from \eqref{eq:Ehestimate1}-\eqref{eq:Ehestimate2}
\begin{align}
-\varepsilon^2 a_h(w_h, v_h-E_hv_h)\lesssim & \varepsilon^2 |u-w_h|_{2, h}\|v_h\|_{2,h} + \left(|u-w_h|_{1,h} + \mathrm{osc}_h(f)\right)|v_h|_{1,h} \notag\\
\lesssim & \left(\|u-w_h\|_{\varepsilon, h} + \mathrm{osc}_h(f)\right)\|v_h\|_{\varepsilon, h}. \label{eq:ahtemp}
\end{align}
On the other hand, it follows from integration by parts
\begin{align}
&(f, v_h-E_hv_h) - b_h(w_h, v_h-E_hv_h) \notag\\
=&\sum_{K\in\mathcal{T}_h}(f+\Delta w_h, v_h-E_hv_h)_{K} - \sum_{F \in \mathcal{F}_{h}^i}(\llbracket\partial_{n_F} w_h\rrbracket , \{ v_h-E_hv_h \})_F \notag\\
  & - \sum_{F \in \mathcal{F}_{h}}(\{\partial_{n_F} (v_h-E_hv_h)\} , \llbracket u-w_h \rrbracket)_F + \sum_{F \in \mathcal{F}_{h}} \frac{\sigma}{h_{F}}(\llbracket u-w_h \rrbracket, \llbracket v_h-E_hv_h \rrbracket)_F. \label{eq:temp2}
\end{align}
By \eqref{eq:posterror1}, it holds
\begin{align*}
(f+\Delta w_h, v_h-E_hv_h)_{K} \leq & \|f+\Delta w_h\|_{0,K}\|v_h-E_hv_h\|_{0,K} \\
\lesssim & \varepsilon^2 |u-w_h|_{2, K}h_K^{-2}\|v_h-E_hv_h\|_{0,K} \\
&+ \left(|u-w_h|_{1, K} + h_K\|f-Q_K^rf\|_{0, K}\right)h_{K}^{-1}\|v_h-E_hv_h\|_{0,K}.
\end{align*}
Hence we obtain from \eqref{eq:Ehestimate1}-\eqref{eq:Ehestimate2}
\begin{align}
\sum_{K\in\mathcal{T}_h}(f+\Delta w_h, v_h-E_hv_h)_{K}
\lesssim & \varepsilon^2 |u-w_h|_{2, h}\|v_h\|_{2,h} + \left(|u-w_h|_{1,h} + \mathrm{osc}_h(f)\right)|v_h|_{1,h} \notag\\
\lesssim & \left(\|u-w_h\|_{\varepsilon, h} + \mathrm{osc}_h(f)\right)\|v_h\|_{\varepsilon, h}. \label{eq:fdeltawhtemp}
\end{align}
Due to \eqref{eq:posterror3}, it follows
\begin{align*}
&\|\llbracket\partial_{n_F} w_h\rrbracket\|_{0,F} \|\{ v_h-E_hv_h \}\|_{0,F} \\
\lesssim &\sum_{K \in \partial^{-1}F}\varepsilon^2 |u-w_h|_{2, K}h_{K}^{-3/2}\|\{ v_h-E_hv_h \}\|_{0,F} \\
&+ \sum_{K \in \partial^{-1}F} \left(|u-w_h|_{1, K}+h_{K}\|f-Q_K^rf\|_{0, K}\right)h_{K}^{-1/2}\|\{ v_h-E_hv_h \}\|_{0,F}.
\end{align*}
Combined with Cauchy-Schwarz inequality and \eqref{eq:Ehestimate1}-\eqref{eq:Ehestimate2}, we achieve
\begin{align}
&- \sum_{F \in \mathcal{F}_{h}^i}(\llbracket\partial_{n_F} w_h\rrbracket , \{ v_h-E_hv_h \})_F \notag\\
\leq &\sum_{F \in \mathcal{F}_{h}^i}\|\llbracket\partial_{n_F} w_h\rrbracket\|_{0,F} \|\{ v_h-E_hv_h \}\|_{0,F} \notag\\
\lesssim & \varepsilon^2 |u-w_h|_{2, h}\|v_h\|_{2,h} + \left(|u-w_h|_{1,h} + \mathrm{osc}_h(f)\right)|v_h|_{1,h} \notag\\
\lesssim & \left(\|u-w_h\|_{\varepsilon, h} + \mathrm{osc}_h(f)\right)\|v_h\|_{\varepsilon, h}. \label{eq:partialnwhtemp}
\end{align}
Making use of Cauchy-Schwarz inequality and \eqref{eq:Ehestimate2} again, we get
\begin{align*}
&\sum_{F \in \mathcal{F}_{h}}(\{\partial_{n_F} (v_h-E_hv_h)\}, \llbracket u-w_h \rrbracket)_F - \sum_{F \in \mathcal{F}_{h}} \frac{\sigma}{h_{F}}(\llbracket u-w_h \rrbracket, \llbracket v_h-E_hv_h \rrbracket)_F \\
\lesssim &\sum_{F \in \mathcal{F}_{h}}\left(h^{\frac{1}{2}}_F\|\{\partial_{n_F} (v_h-E_hv_h)\}\|_{0, F}+\frac{1}{h^{\frac{1}{2}}_F}\|\llbracket v_h-E_hv_h \rrbracket\|_{0,F}\right)\frac{1}{h^{\frac{1}{2}}_F}\|\llbracket u-w_h \rrbracket\|_{0,F} \\
\lesssim & \|u-w_h\|_{\varepsilon, h}|v_h|_{1,h} \leq \|u-w_h\|_{\varepsilon, h}\|v_h\|_{\varepsilon,h},
\end{align*}
which together with \eqref{eq:temp2}-\eqref{eq:partialnwhtemp} implies
\[
(f, v_h-E_hv_h) - b_h(w_h, v_h-E_hv_h)\lesssim \left(\|u-w_h\|_{\varepsilon, h} + \mathrm{osc}_h(f)\right)\|v_h\|_{\varepsilon, h}.
\]
Finally we can finish the proof by combining \eqref{eq:ahtemp} and the last inequality.
\end{proof}

\begin{theorem}\label{thm:ipmwxpriori1}
Let $u\in H_0^2(\Omega)$ be the solution of problem~\eqref{fourthorderpertub}, and $u_h\in V_{h0}$ be the discrete solution of IPMWX method~\eqref{ipdgmwx}. Then we have
\[
\|u-u_h\|_{\varepsilon, h}\lesssim \inf_{w_h\in V_{h0}}\|u-w_h\|_{\varepsilon, h} + \mathrm{osc}_h(f).
\]
\end{theorem}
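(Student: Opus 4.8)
The plan is to use a Strang-type argument, splitting the error into an approximation (interpolation) part and a consistency (nonconformity) part, and then to feed the consistency part into the a posteriori machinery already established in the two preceding lemmas. First I would fix an arbitrary $w_h\in V_{h0}$ and write $u-u_h = (u-w_h) + (w_h-u_h)$. The term $\|u-w_h\|_{\varepsilon,h}$ is exactly the kind of quantity appearing on the right-hand side, so the real work is to bound $\|w_h-u_h\|_{\varepsilon,h}$. Set $\chi_h := w_h - u_h \in V_{h0}$ and invoke the coercivity estimate \eqref{eq:coercivity}:
\[
\|\chi_h\|_{\varepsilon,h}^2 \lesssim \varepsilon^2 a_h(\chi_h,\chi_h) + b_h(\chi_h,\chi_h).
\]
Now I would use the discrete equation \eqref{ipdgmwx} to replace the $u_h$-contribution: $\varepsilon^2 a_h(u_h,\chi_h) + b_h(u_h,\chi_h) = (f,\chi_h)$, so that
\[
\|\chi_h\|_{\varepsilon,h}^2 \lesssim \varepsilon^2 a_h(w_h,\chi_h) + b_h(w_h,\chi_h) - (f,\chi_h).
\]

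The central trick is to insert the enriching operator $E_h$: write $\chi_h = (\chi_h - E_h\chi_h) + E_h\chi_h$ and split the right-hand side accordingly. For the conforming piece $E_h\chi_h \in V_{h0}^C \subset H_0^2(\Omega)$, I would use the continuous variational formulation \eqref{weak form}, namely $\varepsilon^2 a(u, E_h\chi_h) + b(u, E_h\chi_h) = (f, E_h\chi_h)$, together with the fact that on a conforming test function $a_h$ and $b_h$ reduce to $a$ and $b$ (the jump terms in $b_h$ vanish since $\llbracket E_h\chi_h\rrbracket = 0$ and, after integration by parts, the symmetric consistency term also drops). This gives
\[
\varepsilon^2 a_h(w_h,E_h\chi_h) + b_h(w_h,E_h\chi_h) - (f,E_h\chi_h) = \varepsilon^2 a(w_h-u, E_h\chi_h) + b(w_h-u, E_h\chi_h),
\]
which is bounded by $\|u-w_h\|_{\varepsilon,h}\|E_h\chi_h\|_{\varepsilon,h} \lesssim \|u-w_h\|_{\varepsilon,h}\|\chi_h\|_{\varepsilon,h}$ using Cauchy--Schwarz and the stability \eqref{eq:Ehestimate3}. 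For the nonconforming piece $\chi_h - E_h\chi_h$, the term
\[
(f, \chi_h - E_h\chi_h) - \varepsilon^2 a_h(w_h, \chi_h - E_h\chi_h) - b_h(w_h, \chi_h - E_h\chi_h)
\]
is precisely the left-hand side of \eqref{eq:nonconformestimate} in the second lemma (with $v_h=\chi_h$), so it is bounded by $(\|u-w_h\|_{\varepsilon,h} + \mathrm{osc}_h(f))\|\chi_h\|_{\varepsilon,h}$.

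Collecting the two pieces yields $\|\chi_h\|_{\varepsilon,h}^2 \lesssim (\|u-w_h\|_{\varepsilon,h} + \mathrm{osc}_h(f))\|\chi_h\|_{\varepsilon,h}$, hence $\|\chi_h\|_{\varepsilon,h} \lesssim \|u-w_h\|_{\varepsilon,h} + \mathrm{osc}_h(f)$; the triangle inequality then gives $\|u-u_h\|_{\varepsilon,h} \lesssim \|u-w_h\|_{\varepsilon,h} + \mathrm{osc}_h(f)$, and taking the infimum over $w_h \in V_{h0}$ finishes the proof. The main obstacle, and the part that requires care rather than routine estimation, is the bookkeeping in the conforming-piece step: one must verify that after integration by parts the two symmetric consistency terms $\sum_F (\{\partial_{n_F} w_h\}, \llbracket E_h\chi_h\rrbracket)_F$ and $\sum_F (\{\partial_{n_F} E_h\chi_h\}, \llbracket w_h\rrbracket)_F$, together with the penalty term, combine with $a_h, b_h$ so as to reproduce exactly the continuous forms $a, b$ evaluated at $w_h-u$ against $E_h\chi_h$ — this relies on $\llbracket E_h\chi_h\rrbracket = 0$ on all faces and on $\llbracket u\rrbracket = 0$, $\{\partial_{n_F}u\}$ being single-valued for $u \in H_0^2(\Omega)$, so that adding the $u$-terms costs nothing. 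Everything else is Cauchy--Schwarz plus the already-quoted properties \eqref{eq:coercivity}, \eqref{eq:Ehestimate3}, and \eqref{eq:nonconformestimate}.
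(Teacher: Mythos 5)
Your proposal follows the paper's proof essentially verbatim: coercivity plus the discrete equation to reduce to bounding $\varepsilon^2 a_h(w_h,\chi_h)+b_h(w_h,\chi_h)-(f,\chi_h)$, the split $\chi_h=(\chi_h-E_h\chi_h)+E_h\chi_h$, the nonconformity lemma \eqref{eq:nonconformestimate} for the first piece and the continuous variational formulation for the second, followed by the triangle inequality. The final bounds you assert at each stage are the correct ones.

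There is, however, one genuine error in the step you yourself single out as the delicate one. The identity
\[
\varepsilon^2 a_h(w_h,E_h\chi_h) + b_h(w_h,E_h\chi_h) - (f,E_h\chi_h) = \varepsilon^2 a(w_h-u, E_h\chi_h) + b(w_h-u, E_h\chi_h)
\]
is false as stated: while $\llbracket E_h\chi_h\rrbracket=0$ kills the penalty term and the term $\sum_F(\{\partial_{n_F} w_h\},\llbracket E_h\chi_h\rrbracket)_F$, the other consistency term $\sum_F(\{\partial_{n_F} E_h\chi_h\},\llbracket w_h\rrbracket)_F$ does \emph{not} drop, since Morley--Wang--Xu functions are only weakly continuous and generically have nonzero jumps $\llbracket w_h\rrbracket$ across faces; no integration by parts removes it. What is true is the identity with the \emph{discrete} form, $\varepsilon^2 a_h(w_h-u,E_h\chi_h)+b_h(w_h-u,E_h\chi_h)$, whose expansion retains exactly the term $-\sum_F(\{\partial_{n_F} E_h\chi_h\},\llbracket w_h-u\rrbracket)_F=\sum_F(\{\partial_{n_F} E_h\chi_h\},\llbracket w_h\rrbracket)_F$ (using $\llbracket u\rrbracket=0$). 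The paper keeps this term and bounds it by Cauchy--Schwarz together with an inverse (trace) inequality for the piecewise polynomial $E_h\chi_h$, giving $\lesssim |E_h\chi_h|_{1,h}\,\interleave u-w_h\interleave\lesssim\|u-w_h\|_{\varepsilon,h}\|\chi_h\|_{\varepsilon,h}$ via \eqref{eq:Ehestimate3}; so your final estimate for the conforming piece survives, but only after this extra estimate that your write-up claims is unnecessary. (A second, trivial, point: with your sign convention $\chi_h=w_h-u_h$, the nonconforming piece is the \emph{negative} of the left-hand side of \eqref{eq:nonconformestimate} evaluated at $v_h=\chi_h$; you should apply the lemma with $v_h=u_h-w_h$ instead, which is what the paper does.)
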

\begin{proof}
By \eqref{eq:coercivity} and \eqref{ipdgmwx} with $v_h=u_h-w_h$,
\begin{align*}
 \|u_h-w_h\|_{\varepsilon ,h}^2\lesssim &\varepsilon^2 a_h(u_h-w_h, v_h)+b_h(u_h-w_h, v_h) \\
 = & (f, v_h) - \varepsilon^2 a_h(w_h, v_h)-b_h(w_h, v_h).
\end{align*}
Taking $v=E_hv_h$ in \eqref{weak form}, we have
\begin{equation}
  \varepsilon^2 a(u,E_hv_h)+b(u,E_hv_h)=(f,E_hv_h).
\end{equation}
Thus it follows
\begin{align*}
\|u_h-w_h\|_{\varepsilon ,h}^2\lesssim & (f, v_h-E_hv_h)-\varepsilon^2 a_h(w_h, v_h-E_hv_h) - b_h(w_h, v_h-E_hv_h) \\
& + \varepsilon^2 a_h(u-w_h, E_hv_h) + b_h(u-w_h, E_hv_h).
\end{align*}
We get from Cauchy-Schwarz inequality, inverse inequality, and \eqref{eq:Ehestimate3}
\begin{align*}
&\varepsilon^2 a_h(u-w_h, E_hv_h) + b_h(u-w_h, E_hv_h) \\
=& \varepsilon^2 a_h(u-w_h, E_hv_h) + \sum_{K \in \mathcal {T}_h}(\nabla (u-w_h), \nabla E_hv_h)_K  +\sum_{F \in \mathcal{F}_{h}}(\{\partial_{n_F} E_hv_h\} , \llbracket w_h \rrbracket)_F \\
\lesssim &\|u-w_h\|_{\varepsilon, h}\|E_hv_h\|_{\varepsilon, h} \lesssim \|u-w_h\|_{\varepsilon, h}\|v_h\|_{\varepsilon, h}.
\end{align*}
Then we acquire from \eqref{eq:nonconformestimate}
\[
\|u_h-w_h\|_{\varepsilon ,h}^2\lesssim \left(\|u-w_h\|_{\varepsilon, h} + \mathrm{osc}_h(f)\right)\|u_h-w_h\|_{\varepsilon, h},
\]
which indicates
\[
\|u_h-w_h\|_{\varepsilon ,h}\lesssim \|u-w_h\|_{\varepsilon, h} + \mathrm{osc}_h(f).
\]
We can end the proof by using the triangle inequality.
\end{proof}

Let $I_h$ be the Morley-Wang-Xu interpolation operator corresponding to the degrees of freedom~\eqref{eq:mwxdofs}.
We have the following estimate (cf. \cite[Lemma~3]{WangXu2006})
\begin{equation}\label{eq:Ihestimate1}
\|v-I_hv\|_{0, K}+h_K|v-I_hv|_{1, K}+h_K^2|v-I_hv|_{2, K}\lesssim  h_K^{\min\{s,3\}}|v|_{s,K}
\end{equation}
for any $K\in\mathcal{T}_h$ and $v\in H^{s}(\Omega)$ with $s\geq2$.
By a standard scaling argument and the trace inequality, we also have (cf. \cite[(4.4)]{NilssenTaiWinther2001} and \cite[(3.10)]{WangXuHu2006})
\begin{equation}\label{eq:Ihestimate2}
\interleave v-I_hv\interleave^2\lesssim  h|v|_{1}|v|_{2}\quad \forall~ v\in H^2(\Omega).
\end{equation}

Combining Theorem~\ref{thm:ipmwxpriori1} and \eqref{eq:Ihestimate1}, we get the following a priori error estimate.
\begin{corollary}\label{cor:erroripmwx}
Let $u\in H_0^2(\Omega)$ be the solution of problem~\eqref{fourthorderpertub}, and $u_h\in V_{h0}$ be the discrete solution of IPMWX method~\eqref{ipdgmwx}. Assume $u\in H^{s}(\Omega)$ with some $s>2$, then we have
\[
\|u-u_h\|_{\varepsilon, h}\lesssim (\varepsilon+h)h^{\min{\{s,3\}}-2}\|u\|_{s} + \mathrm{osc}_h(f).
\]
\end{corollary}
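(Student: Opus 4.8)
The plan is to combine Theorem~\ref{thm:ipmwxpriori1} with the interpolation estimates already recorded, choosing $w_h = I_h u$ as the competitor in the infimum. The inequality
\[
\|u-u_h\|_{\varepsilon,h} \lesssim \inf_{w_h\in V_{h0}}\|u-w_h\|_{\varepsilon,h} + \mathrm{osc}_h(f) \le \|u-I_h u\|_{\varepsilon,h} + \mathrm{osc}_h(f)
\]
reduces everything to bounding $\|u-I_h u\|_{\varepsilon,h}$, and this is where the work is. Recalling the definition $\|w\|_{\varepsilon,h}^2 = \varepsilon^2 |w|_{2,h}^2 + \interleave w \interleave^2$, I would estimate the two pieces separately. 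For the second-derivative term, summing \eqref{eq:Ihestimate1} over $K\in\mathcal{T}_h$ gives $\varepsilon^2 |u-I_h u|_{2,h}^2 \lesssim \varepsilon^2 h^{2(\min\{s,3\}-2)}\|u\|_s^2$, so $\varepsilon |u-I_h u|_{2,h} \lesssim \varepsilon h^{\min\{s,3\}-2}\|u\|_s$.

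For the $\interleave\cdot\interleave$ term I would use the sharper bound \eqref{eq:Ihestimate2}: $\interleave u-I_h u\interleave^2 \lesssim h |u|_1 |u|_2$. Since $s>2$ we have $|u|_1\lesssim \|u\|_s$ and $|u|_2 \lesssim \|u\|_s$, hence $\interleave u-I_h u\interleave \lesssim h^{1/2}\|u\|_s$. This matches the claimed $h^{\min\{s,3\}-2}$ only when $\min\{s,3\}-2 = 1/2$, i.e. $s=5/2$; for other values of $s$ I would instead (or in addition) fall back on \eqref{eq:Ihestimate1}, which gives $\interleave u-I_h u\interleave \lesssim h^{\min\{s,3\}-2}\|u\|_s$ directly via a scaled trace inequality on each face (the $h_F^{-1/2}$ weight on $\|\llbracket u-I_h u\rrbracket\|_{0,F}$ balances against the $H^1$- and $H^2$-interpolation errors, noting $\llbracket u\rrbracket=0$ so the jump is entirely interpolation error). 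Taking the better of the two bounds — or simply writing $h^{\min\{s,3\}-2}$, which is the one consistent with the stated corollary — yields $\interleave u-I_h u\interleave \lesssim h\, h^{\min\{s,3\}-2}\|u\|_s$ is \emph{not} quite right; rather $\interleave u-I_h u\interleave \lesssim h^{\min\{s,3\}-2}\|u\|_s$, and combining with the $\varepsilon$-weighted second-derivative term gives $\|u-I_h u\|_{\varepsilon,h}\lesssim (\varepsilon+h)h^{\min\{s,3\}-2}\|u\|_s$; the factor $h$ multiplying $h^{\min\{s,3\}-2}$ in the $h$-only term comes from using \eqref{eq:Ihestimate2} together with $|u|_2\lesssim h^{\min\{s,3\}-3}\cdots$ — more precisely I would keep $\interleave u-I_h u\interleave^2 \lesssim h|u|_1|u|_2$ and bound $|u|_2$ by the interpolation-type quantity when $s\ge3$, absorbing one power of $h$.

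The main obstacle is bookkeeping the exponent of $h$ correctly in the $\interleave\cdot\interleave$ contribution so that the combined estimate produces exactly the factor $(\varepsilon+h)h^{\min\{s,3\}-2}$ rather than the cruder $(\varepsilon + h^{1/2})h^{\min\{s,3\}-2}$ that a naive application of \eqref{eq:Ihestimate2} would give. The resolution is that $\interleave u - I_h u\interleave$ should be controlled by $h$ times a higher-order seminorm error: interpolating $u$ and using the trace/inverse inequalities on $\llbracket u - I_h u\rrbracket$ together with \eqref{eq:Ihestimate1} for the $H^1$-part, one gets $\interleave u-I_h u\interleave \lesssim h \cdot h^{\min\{s,3\}-3}\|u\|_s + \dots$, i.e. the jump term inherits an extra power of $h$ compared to the $|u|_{2,h}$ term. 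Once the exponents are lined up, the proof is a two-line combination of the displayed estimates; I would present it in that compressed form, citing Theorem~\ref{thm:ipmwxpriori1}, \eqref{eq:Ihestimate1}, and \eqref{eq:Ihestimate2}, and leaving the elementary summation and scaling steps implicit.
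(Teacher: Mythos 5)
Your overall strategy is exactly the paper's: apply Theorem~\ref{thm:ipmwxpriori1} with $w_h=I_hu$ and invoke \eqref{eq:Ihestimate1}. But the exponent bookkeeping for $\interleave u-I_hu\interleave$ --- which is the entire point of the corollary --- goes wrong, and in a self-contradictory way. The correct bound is
\[
\interleave u-I_hu\interleave \;\lesssim\; h^{\min\{s,3\}-1}\|u\|_s \;=\; h\,h^{\min\{s,3\}-2}\|u\|_s ,
\]
and it follows directly from \eqref{eq:Ihestimate1}: the seminorm part is immediate from $|u-I_hu|_{1,K}\lesssim h_K^{\min\{s,3\}-1}|u|_{s,K}$, and for the jump part (note $\llbracket u\rrbracket=0$ since $u\in H_0^2(\Omega)$) the elementwise trace inequality gives
\[
h_F^{-1}\|\llbracket u-I_hu\rrbracket\|_{0,F}^2 \lesssim \sum_{K\in\partial^{-1}F}\Bigl(h_K^{-2}\|u-I_hu\|_{0,K}^2+|u-I_hu|_{1,K}^2\Bigr)\lesssim \sum_{K\in\partial^{-1}F} h_K^{2(\min\{s,3\}-1)}|u|_{s,K}^2 .
\]
Combined with $\varepsilon|u-I_hu|_{2,h}\lesssim\varepsilon h^{\min\{s,3\}-2}\|u\|_s$ this yields the claimed factor $(\varepsilon+h)h^{\min\{s,3\}-2}$. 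You, however, assert the opposite: you write that $\interleave u-I_hu\interleave\lesssim h\,h^{\min\{s,3\}-2}\|u\|_s$ ``is not quite right'' and that the bound is ``rather $h^{\min\{s,3\}-2}\|u\|_s$''; if that were the best available bound, the corollary would fail (one would only obtain the factor $\varepsilon+1$). Your final ``resolution'' then claims $\interleave u-I_hu\interleave\lesssim h\cdot h^{\min\{s,3\}-3}\|u\|_s$, which equals $h^{\min\{s,3\}-2}\|u\|_s$ again --- still one power of $h$ short of what is needed --- and the intermediate assertion $|u|_2\lesssim h^{\min\{s,3\}-3}\cdots$ is not meaningful, since $|u|_2$ does not depend on $h$.

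A secondary point: \eqref{eq:Ihestimate2} plays no role in this corollary. It is the tool for the boundary-layer estimate \eqref{eq:boundarylayerErrEstimate1}, where only $\varepsilon$-weighted $H^2$/$H^3$ regularity is available; under the hypothesis $u\in H^s(\Omega)$ with $s>2$, estimate \eqref{eq:Ihestimate1} alone (plus the trace inequality for the face terms) suffices and gives the sharper rate. Your detour through \eqref{eq:Ihestimate2}, and the attempt to repair its $h^{1/2}$ by bounding $|u|_2$ with $h$-dependent quantities, is precisely what derails the exponent count.
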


Next we consider the influence of the boundary layers.
Let $u^0\in H_0^1(\Omega)$ be the solution of the Poisson equation
\begin{equation}\label{poisson}
  \begin{cases}
  -\Delta u^0=f \quad\;\; \textrm{in}~\Omega, \\
  u^0=0 \quad\quad\quad\; \textrm{on}~\partial\Omega.
  \end{cases}
\end{equation}
Assume we have the following regularities
\begin{equation}\label{eq:regularity1}
|u|_2+\varepsilon |u|_3\lesssim \varepsilon^{-1/2}\|f\|_0,
\end{equation}
\begin{equation}\label{eq:regularity2}
|u-u^0|_1\lesssim \varepsilon^{1/2}\|f\|_0,
\end{equation}
\begin{equation}\label{eq:regularity3}
\|u^0\|_2\lesssim \|f\|_0.
\end{equation}
The regularities \eqref{eq:regularity1}-\eqref{eq:regularity2} have been proved in \cite{NilssenTaiWinther2001, GuzmanLeykekhmanNeilan2012}
and \eqref{eq:regularity3} is well-known \cite{Grisvard1985} when the domain $\Omega$ is convex.

\begin{theorem}
Let $u\in H_0^2(\Omega)$ be the solution of problem~\eqref{fourthorderpertub}, and $u_h\in V_{h0}$ be the discrete solution of IPMWX method~\eqref{ipdgmwx}. Assume the regularities \eqref{eq:regularity1}-\eqref{eq:regularity3} hold, then we have
\begin{equation}\label{eq:boundarylayerErrEstimate1}
\|u-u_h\|_{\varepsilon, h}\lesssim h^{1/2}\|f\|_0,
\end{equation}
\begin{equation}\label{eq:boundarylayerErrEstimate2}
\|u^0-u_h\|_{\varepsilon, h}\lesssim (\varepsilon^{1/2}+h^{1/2})\|f\|_0.
\end{equation}
\end{theorem}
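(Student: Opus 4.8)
The plan is to bound $\|u-u_h\|_{\varepsilon,h}$ and $\|u^0-u_h\|_{\varepsilon,h}$ by choosing a good comparison function in the quasi-optimality estimate of Theorem~\ref{thm:ipmwxpriori1}. For the first estimate \eqref{eq:boundarylayerErrEstimate1}, I would take $w_h=I_hu$ and use the triangle-type decomposition supplied by the Morley-Wang-Xu interpolation error bounds. Specifically, $\|u-I_hu\|_{\varepsilon,h}^2=\varepsilon^2|u-I_hu|_{2,h}^2+\interleave u-I_hu\interleave^2$. The first term is controlled by \eqref{eq:Ihestimate1} with $s=3$: $\varepsilon^2|u-I_hu|_{2,h}^2\lesssim \varepsilon^2 h^2|u|_3^2\lesssim h^2\bigl(\varepsilon|u|_3\bigr)^2\lesssim h^2\varepsilon^{-1}\|f\|_0^2$ by \eqref{eq:regularity1}. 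That gives $\varepsilon|u-I_hu|_{2,h}\lesssim h\,\varepsilon^{-1/2}\|f\|_0$, which is not yet $h^{1/2}\|f\|_0$ unless $h\lesssim\varepsilon$; so in the regime $\varepsilon\le h$ I would instead bound $\varepsilon^2|u-I_hu|_{2,h}^2\lesssim\varepsilon^2\bigl(|u|_{2,h}^2+|I_hu|_{2,h}^2\bigr)\lesssim\varepsilon^2|u|_2^2\lesssim\varepsilon\|f\|_0^2\le h\|f\|_0^2$, again using \eqref{eq:regularity1} together with the stability $|I_hu|_{2,h}\lesssim|u|_2$ that follows from \eqref{eq:Ihestimate1}. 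Combining the two regimes, $\varepsilon|u-I_hu|_{2,h}\lesssim h^{1/2}\|f\|_0$ in all cases. For the $\interleave\cdot\interleave$ part, \eqref{eq:Ihestimate2} gives $\interleave u-I_hu\interleave^2\lesssim h|u|_1|u|_2\lesssim h\,\|u\|_2\cdot\varepsilon^{-1/2}\|f\|_0$; here $|u|_1$ is bounded by $\|f\|_0$ (energy estimate from \eqref{weak form}, since $b(u,u)\le (f,u)$), and $|u|_2\lesssim\varepsilon^{-1/2}\|f\|_0$ by \eqref{eq:regularity1}, so $\interleave u-I_hu\interleave^2\lesssim h\,\varepsilon^{-1/2}\|f\|_0^2$. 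As before this is $\lesssim h\|f\|_0^2$ when $\varepsilon\gtrsim h$; when $\varepsilon\le h$ I bound $\interleave u-I_hu\interleave^2\lesssim h|u|_1|u|_2$ directly and also crudely by $h\cdot h^{-1}\|f\|_0^2$-type scaling, but it is cleaner simply to note $\min\{h\varepsilon^{-1/2},\,\varepsilon^{1/2}\}\|f\|_0\lesssim h^{1/2}\|f\|_0$, the worst case being $\varepsilon\sim h$. Finally $\mathrm{osc}_h(f)\lesssim h\|f\|_0\lesssim h^{1/2}\|f\|_0$, so Theorem~\ref{thm:ipmwxpriori1} yields \eqref{eq:boundarylayerErrEstimate1}.

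For \eqref{eq:boundarylayerErrEstimate2} I would use the triangle inequality $\|u^0-u_h\|_{\varepsilon,h}\le\|u^0-u\|_{\varepsilon,h}+\|u-u_h\|_{\varepsilon,h}$ and control the first term by the boundary-layer regularity \eqref{eq:regularity2}--\eqref{eq:regularity3}. Since $u^0\in H^2(\Omega)$ with no jumps and $|u^0|_2\le\|u^0\|_2\lesssim\|f\|_0$, the piecewise-$H^2$ norm gives $\|u^0-u\|_{\varepsilon,h}^2=\varepsilon^2|u^0-u|_{2,h}^2+\interleave u^0-u\interleave^2$; the jump term $\interleave u^0-u\interleave$ reduces to $|u^0-u|_{1,h}=|u^0-u|_1\lesssim\varepsilon^{1/2}\|f\|_0$ by \eqref{eq:regularity2} (both functions lie in $H^1_0$, so the jumps vanish), while $\varepsilon^2|u^0-u|_{2,h}^2\lesssim\varepsilon^2\bigl(|u^0|_2^2+|u|_2^2\bigr)\lesssim\varepsilon^2\|f\|_0^2+\varepsilon^2\varepsilon^{-1}\|f\|_0^2\lesssim\varepsilon\|f\|_0^2$ using \eqref{eq:regularity1} and \eqref{eq:regularity3}. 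Hence $\|u^0-u\|_{\varepsilon,h}\lesssim\varepsilon^{1/2}\|f\|_0$. Adding the already-established $\|u-u_h\|_{\varepsilon,h}\lesssim h^{1/2}\|f\|_0$ gives \eqref{eq:boundarylayerErrEstimate2}.

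The main obstacle, and the point requiring care, is the case distinction in $\varepsilon$ versus $h$: the naive bound on $\varepsilon^2|u-I_hu|_{2,h}^2$ via \eqref{eq:Ihestimate1} loses when $\varepsilon$ is large relative to $h$ because $|u|_3$ blows up like $\varepsilon^{-3/2}\|f\|_0$, so one must fall back on the direct estimate $\varepsilon|u|_{2,h}\lesssim\varepsilon^{1/2}\|f\|_0$; symmetrically, using only the direct estimate wastes the $h$-smallness when $\varepsilon\le h$. The resolution is to interpolate — equivalently, to take the minimum of $h\varepsilon^{-1/2}$ and $\varepsilon^{1/2}$, which is always $\le h^{1/2}$ — but this has to be threaded consistently through every term ($|u-I_hu|_{2,h}$, $\interleave u-I_hu\interleave$, and the factors $|u|_2$, $|u|_1$ appearing in \eqref{eq:Ihestimate2}). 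Everything else (the oscillation term, the stability of $I_h$ in the broken $H^2$ seminorm, and the reduction of $\interleave\cdot\interleave$ to an $H^1$ seminorm for $H^1_0$-functions) is routine once the regularity bounds \eqref{eq:regularity1}--\eqref{eq:regularity3} are in hand.
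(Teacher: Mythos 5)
Your treatment of the jump/gradient part $\interleave u-I_hu\interleave$ contains a genuine gap, and it sits exactly where the boundary-layer regularity \eqref{eq:regularity2} must enter. Applying \eqref{eq:Ihestimate2} directly to $u$ gives $\interleave u-I_hu\interleave^2\lesssim h|u|_1|u|_2\lesssim h\varepsilon^{-1/2}\|f\|_0^2$, hence $\interleave u-I_hu\interleave\lesssim h^{1/2}\varepsilon^{-1/4}\|f\|_0$; this is weaker than the required $h^{1/2}\|f\|_0$ for every $\varepsilon<1$ (already for $\varepsilon\sim h$ it only yields $h^{1/4}\|f\|_0$), so your claim that it suffices ``when $\varepsilon\gtrsim h$'' is incorrect --- it would require $\varepsilon\gtrsim1$. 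Moreover, the alternative bound $\interleave u-I_hu\interleave\lesssim\varepsilon^{1/2}\|f\|_0$ that you invoke in the minimum $\min\{h\varepsilon^{-1/2},\varepsilon^{1/2}\}$ is never established and does not follow from the estimates you cite (note also that what \eqref{eq:Ihestimate2} actually yields for the unsquared norm is $h^{1/2}\varepsilon^{-1/4}\|f\|_0$, not $h\varepsilon^{-1/2}\|f\|_0$). The paper closes this by splitting $u=(u-u^0)+u^0$ and applying \eqref{eq:Ihestimate2} only to the layer part: $\interleave(u-u^0)-I_h(u-u^0)\interleave^2\lesssim h|u-u^0|_1|u-u^0|_2\lesssim h\cdot\varepsilon^{1/2}\|f\|_0\cdot\varepsilon^{-1/2}\|f\|_0=h\|f\|_0^2$ by \eqref{eq:regularity1}--\eqref{eq:regularity3}, while the smooth part obeys $\interleave u^0-I_hu^0\interleave\lesssim h|u^0|_2\lesssim h\|f\|_0$. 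Without this decomposition the factor $|u|_2\sim\varepsilon^{-1/2}\|f\|_0$ cannot be compensated, and the overall conclusion \eqref{eq:boundarylayerErrEstimate1} does not follow from your estimates.

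The remaining pieces are fine. Your two-regime bound for $\varepsilon^2|u-I_hu|_{2,h}^2$ (the $s=3$ interpolation estimate when $\varepsilon\ge h$, stability of $I_h$ in the broken $H^2$ seminorm when $\varepsilon\le h$) is a correct, slightly more laborious substitute for the paper's single multiplicative estimate $\varepsilon^2|u-I_hu|_{2,h}^2\lesssim h\varepsilon^2|u|_2|u|_3\lesssim h\|f\|_0^2$; and your derivation of \eqref{eq:boundarylayerErrEstimate2} via the triangle inequality and \eqref{eq:regularity1}--\eqref{eq:regularity3} coincides with the paper's.
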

\begin{proof}
We adopt the similar argument as in \cite{NilssenTaiWinther2001}.
It follows from \eqref{eq:Ihestimate1} and \eqref{eq:regularity1}
\begin{equation}\label{eq:temp3}
\varepsilon^2|u-I_hu|_{2, h}^2\lesssim h\varepsilon^2|u|_2|u|_3\lesssim h\|f\|_0^2.
\end{equation}
Taking $v=u-u^0$ in \eqref{eq:Ihestimate2}, we get from \eqref{eq:regularity1}-\eqref{eq:regularity3}
\[
\interleave u-u^0-I_h(u-u^0)\interleave^2\lesssim  h|u-u^0|_{1}|u-u^0|_{2}\leq h|u-u^0|_{1}(|u|_{2}+|u^0|_{2})\lesssim h\|f\|_0^2.
\]
By \eqref{eq:Ihestimate1} and \eqref{eq:regularity3}, it holds
\[
\interleave u^0-I_hu^0\interleave\lesssim  h|u^0|_{2}\lesssim h\|f\|_0.
\]
Thus we obtain from the last two inequalities
\[
\interleave u-I_hu\interleave\lesssim h^{1/2}\|f\|_0.
\]
We can achieve \eqref{eq:boundarylayerErrEstimate1} by using Theorem~\ref{thm:ipmwxpriori1} and \eqref{eq:temp3}.

Then according to the triangle inequality and \eqref{eq:regularity1}-\eqref{eq:regularity3}, it follows
\begin{align*}
\|u^0-u_h\|_{\varepsilon, h} &\leq \|u^0-u\|_{\varepsilon, h}+\|u-u_h\|_{\varepsilon, h}\leq \varepsilon|u^0-u|_{2}+|u^0-u|_{1}+\|u-u_h\|_{\varepsilon, h} \\
&\leq \varepsilon|u^0|_{2}+\varepsilon|u|_{2}+|u^0-u|_{1}+\|u-u_h\|_{\varepsilon, h}\lesssim \varepsilon^{1/2}\|f\|_0+\|u-u_h\|_{\varepsilon, h},
\end{align*}
which together with \eqref{eq:boundarylayerErrEstimate1} implies \eqref{eq:boundarylayerErrEstimate2}.
\end{proof}

\begin{remark}\label{remarknew}\rm
In the case of $\varepsilon\lesssim h^{\gamma}$ with $\gamma>1$, a better error estimate of $\|u-u_h\|_{\varepsilon, h}$ can be derived if we apply the Nitsche's technique to the IPMWX method~\eqref{ipdgmwx} as in \cite{GuzmanLeykekhmanNeilan2012}, i.e. impose the boundary condition $u=\partial_nu=0$ weakly.
To be specific, the modified IPMWX method is to find $u_h^N \in V_{h} $ such that
\begin{equation*}%\label{modifyipdgmwx}
  \varepsilon^2 a_h^N(u_h^N,v_h)+b_h(u_h,v_h)=(f,v_h) \quad \forall~v_h \in V_{h},
\end{equation*}
where
\begin{align*}
a_h^N(u_h^N,v_h):=a_h(u_h^N,v_h) - \sum\limits_{F \in \mathcal{F}_{h}^{\partial}}\Big(&(\partial_{nn}^2u_h^N, \partial_{n}v_h)_F+(\partial_{n}u_h^N, \partial_{nn}^2v_h)_F \\
&-\dfrac{\sigma_1}{h_{F}}(\partial_{n}u_h^N, \partial_{n}v_h)_F-\dfrac{\sigma_2}{h_{F}^3}(u_h^N, v_h)_F\Big),
\end{align*}
with $\mathcal{F}_{h}^{\partial}:=\mathcal{F}_{h}\backslash \mathcal{F}_{h}^{i}$.
Then if $u^0\in H^s(\Omega)$ with $2\leq s\leq 3$, we have (cf. \cite[Theorem 1]{GuzmanLeykekhmanNeilan2012})
\[
\|u-u_h^N\|_{\varepsilon, h}\lesssim \varepsilon^{1/2}\|f\|_0 + h^{s-1}\|u^0\|_s.
\]
This estimate is optimal if $\varepsilon\lesssim h^{2(s-1)}$, even though the exact solution $u$ is not robustly smooth enough with respect to the parameter $\varepsilon$.
And it gives a better convergence rate than $0.5$ if $\varepsilon\lesssim h^{\gamma}$ with $\gamma>1$.
\end{remark}

\section{Super penalty Morley-Wang-Xu element method}

In this section,  we will design another Morley-Wang-Xu element method with penalty for problem~\eqref{fourthorderpertub} by applying the super penalty methods in \cite{BabuvskaZlamal1973, BrezziManziniMariniPietraEtAl2000} to the Laplace operator. The super penalty method possesses rather simple variational formulation. A similar penalty method for the Laplace operator is the weakly over-penalized symmetric interior penalty method developed in \cite{BrennerOwensSung2008, BrennerOwensSung2012}.

%\subsection{Penalty method}
Define discrete bilinear form
\begin{align*}
  \widetilde{b}_{h}(w,v):= & \sum_{K \in \mathcal {T}_h}(\nabla w, \nabla v)_K + \sum_{F \in \mathcal{F}_{h}} \frac{1}{h_{F}^{2p+1}}(\llbracket w \rrbracket, \llbracket v \rrbracket)_F \quad \forall~ w, v\in H^{1}(\Omega)+V_{h},
\end{align*}
where $0<p\leq 1$.
The super penalty Morley-Wang-Xu (SPMWX) element method for problem~\eqref{fourthorderpertub}
is to find $\widetilde{u}_h \in V_{h0} $ such that
\begin{equation}\label{pmwx}
  \varepsilon^2 a_h(\widetilde{u}_h,v_h)+\widetilde{b}_h(\widetilde{u}_h,v_h)=(f,v_h) \quad \forall~v_h \in V_{h0}.
\end{equation}

For any $w\in H^1(\Omega)+V_h$, define some discrete norms
\begin{gather*}
  \interleave w \interleave_p^2:=|w|_{1,h}^2+\sum_{F \in \mathcal{F}_{h}} h_F^{-(2p+1)} \| \llbracket w \rrbracket \|_{0,F}^2 ,\quad
  \| w \|_{\varepsilon,p,h}^2:=\varepsilon^2 |w|_{2,h}^2+\interleave w \interleave_p^2 .
\end{gather*}
It is apparent that
%By \cite{Arnold1982} ,We also have
%\[
%a_h(w_h, v_h)+\widetilde{b}_h(w_h, v_h)\lesssim  \| w_h \|_{\varepsilon,p,h}\| v_h\|_{\varepsilon,p,h} \quad\forall~w_h, v_h\in V_h,
%\]
 \begin{equation}\label{eq:coercivity1}
 \| w_h \|_{\varepsilon,p,h}^2\lesssim \varepsilon^2 a_h(w_h,w_h)+\widetilde{b}_h(w_h,w_h)\ \ \forall~w_h \in V_{h}.
\end{equation}

%\subsection{Error analysis}
By \eqref{eq:Ehestimate3}, we also have for any $w_h \in V_{h0}$
\begin{gather}
 \|E_{h}w_h\|_{\varepsilon,p,h}=\|E_{h}w_h\|_{\varepsilon,h} \lesssim \|w_h\|_{\varepsilon,h} \lesssim \|w_h\|_{\varepsilon,p,h}. \label{eq:Ehestimate4}
\end{gather}

\begin{lemma}
Let $u\in H_0^2(\Omega)$ be the solution of problem~\eqref{fourthorderpertub}. It holds for any $w_h, v_h\in V_{h0}$
\begin{align}
&(f, v_h-E_hv_h)-\varepsilon^2 a_h(w_h, v_h-E_hv_h) - \widetilde{b}_h(w_h, v_h-E_hv_h) \notag\\
\lesssim & \left(\|u-w_h\|_{\varepsilon,p,h} +h^p|u|_1 + \mathrm{osc}_h(f)\right)\|v_h\|_{\varepsilon,p,h}. \label{eq:nonconformestimate1}
\end{align}
\end{lemma}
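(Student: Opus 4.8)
The plan is to mimic the proof of the analogous estimate \eqref{eq:nonconformestimate} for the IPMWX method, tracking carefully where the super penalty changes the bookkeeping. First I would split the left-hand side exactly as in the previous lemma: the term $-\varepsilon^2 a_h(w_h, v_h-E_hv_h)$ is handled verbatim, since it involves only the fourth-order form and the enriching operator, and the estimate \eqref{eq:ahtemp} carries over word for word, giving a bound by $\left(\|u-w_h\|_{\varepsilon,h}+\mathrm{osc}_h(f)\right)\|v_h\|_{\varepsilon,h}$, which we then majorize by $\left(\|u-w_h\|_{\varepsilon,p,h}+\mathrm{osc}_h(f)\right)\|v_h\|_{\varepsilon,p,h}$ using $\|\cdot\|_{\varepsilon,h}\le\|\cdot\|_{\varepsilon,p,h}$ (valid since $0<p\le1$ makes $h_F^{-(2p+1)}\gtrsim h_F^{-1}$) together with \eqref{eq:Ehestimate4}.

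The substantive part is $(f, v_h-E_hv_h)-\widetilde{b}_h(w_h, v_h-E_hv_h)$. Here I would integrate by parts on each $K$ and rewrite, as in \eqref{eq:temp2}, as the sum of $\sum_K (f+\Delta w_h, v_h-E_hv_h)_K$, the jump term $-\sum_{F\in\mathcal{F}_h^i}(\llbracket\partial_{n_F}w_h\rrbracket,\{v_h-E_hv_h\})_F$, and the penalty term $-\sum_{F\in\mathcal{F}_h}h_F^{-(2p+1)}(\llbracket w_h\rrbracket,\llbracket v_h-E_hv_h\rrbracket)_F$ — note there is no consistency term of the form $(\{\partial_{n_F}(v_h-E_hv_h)\},\llbracket u-w_h\rrbracket)_F$ because $\widetilde b_h$ has no symmetrizing term. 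The first two of these are estimated exactly as \eqref{eq:fdeltawhtemp} and \eqref{eq:partialnwhtemp} via the a posteriori bounds \eqref{eq:posterror1} and \eqref{eq:posterror3} and the enriching estimates \eqref{eq:Ehestimate1}--\eqref{eq:Ehestimate2}, yielding $\left(\|u-w_h\|_{\varepsilon,h}+\mathrm{osc}_h(f)\right)|v_h|_{1,h}\le \left(\|u-w_h\|_{\varepsilon,p,h}+\mathrm{osc}_h(f)\right)\|v_h\|_{\varepsilon,p,h}$.

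The one genuinely new contribution, and the source of the extra $h^p|u|_1$ term, is the penalty term. Since $u\in H_0^2(\Omega)$ is continuous across faces and vanishes on $\partial\Omega$, $\llbracket u\rrbracket=0$ on every $F$, so I can write $\llbracket w_h\rrbracket=\llbracket w_h-u\rrbracket$ and estimate
\[
\Big|\sum_{F\in\mathcal{F}_h}h_F^{-(2p+1)}(\llbracket w_h-u\rrbracket,\llbracket v_h-E_hv_h\rrbracket)_F\Big|\le \interleave u-w_h\interleave_p\Big(\sum_F h_F^{-(2p+1)}\|\llbracket v_h-E_hv_h\rrbracket\|_{0,F}^2\Big)^{1/2}.
\]
For the factor in $v_h-E_hv_h$ I would bound $h_F^{-(2p+1)}\|\llbracket v_h-E_hv_h\rrbracket\|_{0,F}^2\lesssim h_F^{-(2p+1)}\|v_h-E_hv_h\|_{0,\partial K}^2$ and split $h_F^{-(2p+1)}=h_F^{-(2p-1)}\cdot h_F^{-2}$; using $h_F^{-1}\|v_h-E_hv_h\|_{0,\partial K}^2\lesssim h_K^2|v_h|_{2,h}^2$ from \eqref{eq:Ehestimate1} (or the $h_F$-version from \eqref{eq:Ehestimate2} for the $|v_h|_{1,h}$ scaling) and $h_F^{-(2p-1)}\le h_K^{-(2p-1)}$ with $p\le1$, this contributes at worst $h_K^{2-2p}|v_h|_{2,h}^2$ on each element, hence $\lesssim \|v_h\|_{\varepsilon,p,h}^2$ after distributing the $\varepsilon$ appropriately — here one must be a little careful that the resulting power of $h$ is nonnegative, which holds since $2-2p\ge0$. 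The anticipated main obstacle is precisely this estimate of the penalty term against the $v_h$-side: one must choose the split of the weight $h_F^{-(2p+1)}$ and the version of the enriching estimate so that the $h$-powers balance and no negative power of $h$ survives, and the $u$-side is then absorbed into $\interleave u-w_h\interleave_p\le \|u-w_h\|_{\varepsilon,p,h}$ — but since $u$ itself is not in $V_h$, one instead inserts $I_hu$ (or simply notes $\llbracket u-w_h\rrbracket=\llbracket I_hu-w_h\rrbracket$-type manipulations are not needed because $\llbracket u\rrbracket=0$ exactly) and picks up $h^p|u|_1$ via $\interleave u\interleave_p = |u|_{1}$ when one does need to compare with an interpolant. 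Collecting the three pieces and \eqref{eq:ahtemp} gives \eqref{eq:nonconformestimate1}.
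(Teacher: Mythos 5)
Your decomposition of $(f, v_h-E_hv_h)-\widetilde{b}_h(w_h, v_h-E_hv_h)$ is incorrect, and the error sits exactly where the new term $h^p|u|_1$ comes from. Elementwise integration by parts of $\sum_{K}(\nabla w_h,\nabla(v_h-E_hv_h))_K$ produces the boundary contribution $\sum_{K}(\partial_n w_h, v_h-E_hv_h)_{\partial K}$, which splits into
\[
\sum_{F\in\mathcal{F}_h^i}\bigl(\llbracket\partial_{n_F}w_h\rrbracket,\{v_h-E_hv_h\}\bigr)_F
+\sum_{F\in\mathcal{F}_h}\bigl(\{\partial_{n_F}w_h\},\llbracket v_h-E_hv_h\rrbracket\bigr)_F .
\]
In the IPMWX lemma the second sum is cancelled by the consistency term $-\sum_F(\{\partial_{n_F}w\},\llbracket v\rrbracket)_F$ built into $b_h$; but $\widetilde b_h$ contains no such term, so $-\sum_{F}(\{\partial_{n_F}w_h\},\llbracket v_h-E_hv_h\rrbracket)_F$ must appear in the identity. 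You assert it is absent ``because $\widetilde b_h$ has no symmetrizing term,'' which inverts the logic: the absence of the symmetrizing term is precisely why this face term survives. The paper bounds it by
$\sum_F h_F^{p+1/2}\|\{\partial_{n_F}w_h\}\|_{0,F}\,h_F^{-p-1/2}\|\llbracket v_h\rrbracket\|_{0,F}\lesssim h^p|w_h|_{1,h}\|v_h\|_{\varepsilon,p,h}\le h^p(|u|_1+\|u-w_h\|_{\varepsilon,p,h})\|v_h\|_{\varepsilon,p,h}$, and this is the \emph{only} source of $h^p|u|_1$ in the estimate.

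Consequently your attempt to extract $h^p|u|_1$ from the penalty term is misdirected. Since $\llbracket u\rrbracket=0$ and $\llbracket E_hv_h\rrbracket=0$ (as $E_hv_h\in H_0^2(\Omega)$), the penalty term equals $-\sum_F h_F^{-(2p+1)}(\llbracket u-w_h\rrbracket,\llbracket v_h\rrbracket)_F$, which is bounded by $\interleave u-w_h\interleave_p\interleave v_h\interleave_p\le\|u-w_h\|_{\varepsilon,p,h}\|v_h\|_{\varepsilon,p,h}$ by a single weighted Cauchy--Schwarz; no splitting of the weight $h_F^{-(2p+1)}$, no enriching estimates, and no interpolant of $u$ are needed, and no $|u|_1$ arises there. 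The remaining pieces of your argument (the $a_h$ term via \eqref{eq:ahtemp}, the volume residual and the $\llbracket\partial_{n_F}w_h\rrbracket$ jump via \eqref{eq:fdeltawhtemp}--\eqref{eq:partialnwhtemp}, and the comparison $\|\cdot\|_{\varepsilon,h}\lesssim\|\cdot\|_{\varepsilon,p,h}$) are fine, but as written the proof does not establish the stated bound because the term actually responsible for $h^p|u|_1$ never enters your analysis.
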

\begin{proof}
Applying integration by parts, it follows
\begin{align}
&(f, v_h-E_hv_h)-\widetilde{b}_h(w_h, v_h-E_hv_h) \notag \\
=&\sum_{K\in\mathcal{T}_h}(f+\Delta w_h, v_h-E_hv_h)_{K} - \sum_{F \in \mathcal{F}_{h}^i}(\llbracket\partial_{n_F} w_h\rrbracket , \{ v_h-E_hv_h \})_F \notag \\
 &
  -\sum_{F \in \mathcal{F}_{h}}(\{\partial_{n_F} w_h \} , \llbracket v_h-E_hv_h \rrbracket)_F  - \sum_{F \in \mathcal{F}_{h}} \frac{1}{h^{2p+1}_{F}}(\llbracket u-w_h \rrbracket, \llbracket v_h-E_hv_h \rrbracket)_F. \label{eq:ptemp1}
\end{align}
Using the Cauchy-Schwarz inequality, the inverse inequality and the triangle inequality, we have
\begin{align}
-\sum_{F \in \mathcal{F}_{h}}(\{\partial_{n_F} w_h \} , \llbracket v_h-E_hv_h \rrbracket)_F
\leq & \sum_{F \in \mathcal{F}_{h}} h^{p+\frac{1}{2}}_{F}\|\{\partial_{n_F} w_h \}\|_{0,F}
 h^{-p-\frac{1}{2}}_{F}\|\llbracket v_h \rrbracket\|_{0,F}  \notag \\
 \lesssim & h^p|w_h|_{1,h}\|v_h\|_{\varepsilon,p,h} \notag\\
 \leq & h^p(|u|_{1}+|u-w_h|_{1,h})\|v_h\|_{\varepsilon,p,h} \notag \\
 \leq & h^p(|u|_{1}+\|u-w_h\|_{\varepsilon,p,h})\|v_h\|_{\varepsilon,p,h}, \label{eq:ptemp2}
\end{align}
\begin{align}
-\sum_{F \in \mathcal{F}_{h}} \frac{1}{h^{2p+1}_{F}}(\llbracket u-w_h \rrbracket, \llbracket v_h-E_hv_h \rrbracket)_F = & -\sum_{F \in \mathcal{F}_{h}} \frac{1}{h^{2p+1}_{F}}(\llbracket u-w_h \rrbracket, \llbracket v_h\rrbracket)_F \notag \\
\lesssim & \|u-w_h\|_{\varepsilon,p,h}\|v_h\|_{\varepsilon,p,h}. \label{eq:ptemp3}
\end{align}
Combining \eqref{eq:fdeltawhtemp}-\eqref{eq:partialnwhtemp} and \eqref{eq:ptemp1}-\eqref{eq:ptemp3}, we get
\[
(f, v_h-E_hv_h)- \widetilde{b}_h(w_h, v_h-E_hv_h)
\lesssim \left(\|u-w_h\|_{\varepsilon,p,h} +h^p|u|_1 + \mathrm{osc}_h(f)\right)\|v_h\|_{\varepsilon,p,h},
\]
which together with \eqref{eq:ahtemp} implies \eqref{eq:nonconformestimate1}.
\end{proof}

\begin{theorem}
Let $u\in H_0^2(\Omega)$ be the solution of problem~\eqref{fourthorderpertub}, and $\widetilde{u}_h\in V_{h0}$ be the discrete solution of the SPMWX method~\eqref{pmwx}. Then we have
\[
\|u-\widetilde{u}_h\|_{\varepsilon,p,h}\lesssim \inf_{w_h\in V_{h0}}\|u-w_h\|_{\varepsilon,p,h} + h^p|u|_{1}+ \mathrm{osc}_h(f).
\]
\end{theorem}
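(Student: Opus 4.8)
The plan is to mimic the proof of Theorem~\ref{thm:ipmwxpriori1} almost verbatim, replacing the norm $\|\cdot\|_{\varepsilon,h}$ by $\|\cdot\|_{\varepsilon,p,h}$ and the bilinear form $b_h$ by $\widetilde b_h$, and invoking the estimates now available for the super penalty setting, namely the coercivity \eqref{eq:coercivity1}, the enriching bound \eqref{eq:Ehestimate4}, and the consistency-type estimate \eqref{eq:nonconformestimate1}. First I would fix an arbitrary $w_h\in V_{h0}$, set $v_h:=\widetilde u_h-w_h$, and use \eqref{eq:coercivity1} together with the discrete equation \eqref{pmwx} to write
\[
\|\widetilde u_h-w_h\|_{\varepsilon,p,h}^2\lesssim \varepsilon^2 a_h(\widetilde u_h-w_h,v_h)+\widetilde b_h(\widetilde u_h-w_h,v_h)=(f,v_h)-\varepsilon^2 a_h(w_h,v_h)-\widetilde b_h(w_h,v_h).
\]

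Next I would split $v_h=(v_h-E_hv_h)+E_hv_h$. For the conforming part, since $E_hv_h\in V_{h0}^C\subset H_0^2(\Omega)$, I would test the weak formulation \eqref{weak form} with $v=E_hv_h$ to get $(f,E_hv_h)=\varepsilon^2 a(u,E_hv_h)+b(u,E_hv_h)$, and then observe that for a conforming function all jump terms in $\widetilde b_h$ vanish, so $\widetilde b_h(\cdot,E_hv_h)=b(\cdot,E_hv_h)$ and $a_h(\cdot,E_hv_h)=a(\cdot,E_hv_h)$; hence this part contributes $\varepsilon^2 a_h(u-w_h,E_hv_h)+b(u-w_h,E_hv_h)$, which by Cauchy--Schwarz is bounded by $\|u-w_h\|_{\varepsilon,p,h}\|E_hv_h\|_{\varepsilon,p,h}\lesssim \|u-w_h\|_{\varepsilon,p,h}\|v_h\|_{\varepsilon,p,h}$ using \eqref{eq:Ehestimate4} (here one uses that $b(u-w_h,E_hv_h)\le |u-w_h|_{1,h}|E_hv_h|_{1,h}\le\|u-w_h\|_{\varepsilon,p,h}\|E_hv_h\|_{\varepsilon,p,h}$, noting $u-w_h$ may be nonconforming but $b$ only involves the broken gradient). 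For the nonconforming part $v_h-E_hv_h$, I would apply the already-proved Lemma, i.e. \eqref{eq:nonconformestimate1}, which bounds $(f,v_h-E_hv_h)-\varepsilon^2 a_h(w_h,v_h-E_hv_h)-\widetilde b_h(w_h,v_h-E_hv_h)$ by $\big(\|u-w_h\|_{\varepsilon,p,h}+h^p|u|_1+\mathrm{osc}_h(f)\big)\|v_h\|_{\varepsilon,p,h}$.

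Combining the two parts gives
\[
\|\widetilde u_h-w_h\|_{\varepsilon,p,h}^2\lesssim \big(\|u-w_h\|_{\varepsilon,p,h}+h^p|u|_1+\mathrm{osc}_h(f)\big)\|\widetilde u_h-w_h\|_{\varepsilon,p,h},
\]
so dividing by $\|\widetilde u_h-w_h\|_{\varepsilon,p,h}$ yields $\|\widetilde u_h-w_h\|_{\varepsilon,p,h}\lesssim \|u-w_h\|_{\varepsilon,p,h}+h^p|u|_1+\mathrm{osc}_h(f)$, and the triangle inequality $\|u-\widetilde u_h\|_{\varepsilon,p,h}\le\|u-w_h\|_{\varepsilon,p,h}+\|\widetilde u_h-w_h\|_{\varepsilon,p,h}$ followed by taking the infimum over $w_h\in V_{h0}$ finishes the proof. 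I do not anticipate a serious obstacle, since every ingredient is already in place; the only point requiring a little care is the conforming-part estimate—one must check that replacing $b_h$ by $\widetilde b_h$ and $\|\cdot\|_{\varepsilon,h}$ by $\|\cdot\|_{\varepsilon,p,h}$ does not break the argument, which is immediate because $\widetilde b_h$ agrees with $b$ on conforming test functions and $\interleave\cdot\interleave_p$ dominates $|\cdot|_{1,h}$, so the bound $b(u-w_h,E_hv_h)\lesssim\|u-w_h\|_{\varepsilon,p,h}\|E_hv_h\|_{\varepsilon,p,h}$ still holds.
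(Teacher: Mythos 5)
Your proposal is correct and follows essentially the same route as the paper: coercivity \eqref{eq:coercivity1} with $v_h=\widetilde u_h-w_h$, splitting off the conforming part $E_hv_h$ via the weak form \eqref{weak form}, bounding it with Cauchy--Schwarz and \eqref{eq:Ehestimate4} (the jump terms in $\widetilde b_h$ indeed vanish on $E_hv_h$), handling the nonconforming part with \eqref{eq:nonconformestimate1}, and finishing with the triangle inequality. No gaps.
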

\begin{proof}
By ~\eqref{eq:coercivity1} and ~\eqref{pmwx} with $v_h=\widetilde{u}_h-w_h$, and following the line of the proof of Theorem \ref{thm:ipmwxpriori1}, we have
\begin{align*}
\|\widetilde{u}_h-w_h\|_{\varepsilon,p,h}^2\lesssim & (f, v_h-E_hv_h)-\varepsilon^2 a_h(w_h, v_h-E_hv_h) - \widetilde{b}_h(w_h, v_h-E_hv_h) \\
& + \varepsilon^2 a_h(u-w_h, E_hv_h) + \widetilde{b}_h(u-w_h, E_hv_h).
\end{align*}
Combined with Cauchy-Schwarz inequality, inverse inequality, and \eqref{eq:Ehestimate4}, it holds
\begin{align*}
&\varepsilon^2 a_h(u-w_h, E_hv_h) + \widetilde{b}_h(u-w_h, E_hv_h) \\
=& \varepsilon^2 a_h(u-w_h, E_hv_h) + \sum_{K \in \mathcal {T}_h}(\nabla (u-w_h), \nabla E_hv_h)_K  \\
\lesssim &\|u-w_h\|_{\varepsilon,p,h}\|E_hv_h\|_{\varepsilon,p,h} \lesssim \|u-w_h\|_{\varepsilon,p,h}\|v_h\|_{\varepsilon,p,h}.
\end{align*}
Then from \eqref{eq:nonconformestimate1}, we have
\begin{align*}
\|\widetilde{u}_h-w_h\|_{\varepsilon,p,h}\lesssim & \|u-w_h\|_{\varepsilon,p,h} +h^p|u|_{1} + \mathrm{osc}_h(f).
\end{align*}
Using the triangle inequality, we have
\begin{align*}
\|u-\widetilde{u}_h\|_{\varepsilon,p,h}\lesssim & \inf_{w_h\in V_{h0}}\|u-w_h\|_{\varepsilon,p,h} + h^p|u|_{1}+ \mathrm{osc}_h(f),
\end{align*}
as required.
\end{proof}

\begin{corollary}\label{cor:errorspmwx}
Let $u\in H_0^2(\Omega)$ be the solution of problem~\eqref{fourthorderpertub}, and $\widetilde{u}_h\in V_{h0}$ be the discrete solution of the SPMWX method~\eqref{pmwx}. Assume $u\in H^{s}(\Omega)$ with some $s>2$, then we have
\[
\|u-\widetilde{u}_h\|_{\varepsilon,p,h}\lesssim \left(\varepsilon h^{\min{\{s,3\}}-2} + h^{\min{\{s,2p+1\}}-p-1}\right)\|u\|_{s} + \mathrm{osc}_h(f).
\]
In particular, if $p=1$, then
\[
\|u-\widetilde{u}_h\|_{\varepsilon,p,h}\lesssim h^{\min{\{s,3\}}-2}\|u\|_{s} + \mathrm{osc}_h(f).
\]
If $p=(s-1)/2$ with $s\leq 3$, then
\[
\|u-\widetilde{u}_h\|_{\varepsilon,p,h}\lesssim\left(\varepsilon + h^{(3-s)/2}\right)h^{s-2}\|u\|_{s} + \mathrm{osc}_h(f).
\]\end{corollary}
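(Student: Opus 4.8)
The plan is to deduce Corollary~\ref{cor:errorspmwx} from the preceding theorem by choosing a concrete $w_h$ in the infimum and estimating $\|u-w_h\|_{\varepsilon,p,h}$ with the Morley-Wang-Xu interpolation operator. Specifically, I would take $w_h = I_h u \in V_{h0}$ (noting $u\in H_0^2(\Omega)$ ensures $I_hu\in V_{h0}$), so that by the theorem
\[
\|u-\widetilde{u}_h\|_{\varepsilon,p,h}\lesssim \|u-I_hu\|_{\varepsilon,p,h} + h^p|u|_1 + \mathrm{osc}_h(f).
\]
The term $h^p|u|_1\leq h^p\|u\|_s$ is already of the claimed form since $\min\{s,2p+1\}-p-1\leq p$ always, and in fact $h^p|u|_1$ can be absorbed once one checks $2p+1\ge p+1$, i.e. $p\ge0$. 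So the crux is bounding $\|u-I_hu\|_{\varepsilon,p,h}^2 = \varepsilon^2|u-I_hu|_{2,h}^2 + \interleave u-I_hu\interleave_p^2$.

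For the first piece I would sum \eqref{eq:Ihestimate1} over $K\in\mathcal{T}_h$ to get $\varepsilon^2|u-I_hu|_{2,h}^2\lesssim \varepsilon^2 h^{2(\min\{s,3\}-2)}\|u\|_s^2$, giving the $\varepsilon h^{\min\{s,3\}-2}$ contribution. For the jump part, since $\llbracket u\rrbracket=0$ across interior faces and on $\partial\Omega$ (as $u\in H_0^2(\Omega)$), we have $\llbracket u-I_hu\rrbracket = -\llbracket I_hu\rrbracket = \llbracket u - I_hu\rrbracket$ interpreted via the trace of $u-I_hu$ from each side; I would bound $\sum_F h_F^{-(2p+1)}\|\llbracket u-I_hu\rrbracket\|_{0,F}^2$ using the trace inequality on each $K$: $\|u-I_hu\|_{0,\partial K}^2\lesssim h_K^{-1}\|u-I_hu\|_{0,K}^2 + h_K|u-I_hu|_{1,K}^2$, then invoke \eqref{eq:Ihestimate1} to get $\lesssim h_K^{2\min\{s,3\}-1}|u|_{s,K}^2$. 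Hence $h_F^{-(2p+1)}\|\llbracket u-I_hu\rrbracket\|_{0,F}^2\lesssim h_K^{2\min\{s,3\}-2p-2}|u|_{s,K}^2$, and combined with $|u-I_hu|_{1,h}^2\lesssim h^{2(\min\{s,3\}-1)}\|u\|_s^2$ this yields $\interleave u-I_hu\interleave_p^2\lesssim h^{2(\min\{s,2p+1\}-p-1)}\|u\|_s^2$ after checking which power dominates (the $H^1$ term gives exponent $\min\{s,3\}-1$, the penalty term gives $\min\{s,3\}-p-1$, and capping at $2p+1$ reflects that for $s>2p+1$ no extra decay of the jump is gained — the jump of $I_hu$ is only $O(h^{2p+1})$ relative to the scaling, or more carefully one uses that the jump of $u-I_hu$ need not vanish to all orders). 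Putting the two pieces together gives the main estimate; the two special cases $p=1$ and $p=(s-1)/2$ then follow by direct substitution, using for $p=1$ that $\min\{s,3\}-2\le\min\{s,3\}-1-p+0$ — more precisely $\min\{s,3\}-2 \ge \min\{s,3\}-2$ and $\varepsilon\lesssim 1$ is not assumed, so one must observe $\varepsilon h^{\min\{s,3\}-2}+h^{\min\{s,3\}-2}\lesssim h^{\min\{s,3\}-2}$ fails unless $\varepsilon\lesssim 1$; rather for $p=1$ one has $\min\{s,2p+1\}-p-1 = \min\{s,3\}-2$, so both terms carry the \emph{same} power $h^{\min\{s,3\}-2}$ and the sum is $(\varepsilon+1)h^{\min\{s,3\}-2}\lesssim h^{\min\{s,3\}-2}$, legitimately dropping $\varepsilon$ since $\varepsilon$ is a small parameter.

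The main obstacle I anticipate is getting the exponent $\min\{s,2p+1\}-p-1$ exactly right in the penalty term: one must carefully track that the face-jump of $u-I_hu$, scaled by $h_F^{-(2p+1)}$, contributes $h_K^{2\min\{s,3\}-2-2p}$ per element only when $2p+1\le s$, whereas for $s>2p+1$ the relevant bound on $\|\llbracket u-I_hu\rrbracket\|_{0,F}$ saturates — this is where the $\min$ with $2p+1$ enters, and it requires either an explicit approximation-order bookkeeping or, more cleanly, splitting $\sum_F h_F^{-(2p+1)}\|\llbracket u-I_hu\rrbracket\|_{0,F}^2 = \sum_F h_F^{-2p}\cdot h_F^{-1}\|\llbracket u-I_hu\rrbracket\|_{0,F}^2$ and bounding the last factor by the full-order $h_K^{2\min\{s,3\}-3}$ while extracting only $h_F^{-2p}\le h^{-2p}$ uniformly, which overshoots; the sharp statement instead balances global order against the penalty weight. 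A secondary minor point is verifying $h^p|u|_1$ is dominated, which holds because $\min\{s,2p+1\}-p-1 \le p$ is equivalent to $\min\{s,2p+1\}\le 2p+1$, trivially true, so $h^{\min\{s,2p+1\}-p-1}\|u\|_s \gtrsim h^p|u|_1$ only if the exponent on the left is $\le p$, which it is — hence $h^p|u|_1$ is absorbed into the stated bound. I would present the argument as: (1) apply the theorem with $w_h=I_hu$; (2) estimate the three summands of $\|u-I_hu\|_{\varepsilon,p,h}^2$ via \eqref{eq:Ihestimate1} and the trace inequality; (3) absorb $h^p|u|_1$; (4) substitute $p=1$ and $p=(s-1)/2$.
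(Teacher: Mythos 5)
Your proposal is correct and follows the route the paper intends for this corollary: apply the preceding theorem with $w_h=I_hu$ and bound $\varepsilon^2|u-I_hu|_{2,h}^2$, $|u-I_hu|_{1,h}^2$ and $\sum_F h_F^{-(2p+1)}\|\llbracket u-I_hu\rrbracket\|_{0,F}^2$ via \eqref{eq:Ihestimate1} together with an elementwise trace inequality, then absorb $h^p|u|_1$. One clarification on the point you flag as the ``main obstacle'': the $\min\{s,2p+1\}$ does not come from any saturation of the jump estimate --- your trace-inequality bound $h_F^{-(2p+1)}\|\llbracket u-I_hu\rrbracket\|_{0,F}^2\lesssim h_K^{2(\min\{s,3\}-p-1)}|u|_{s,K}^2$ already suffices for all $s$ --- it arises solely from combining that exponent with the exponent $p$ coming from the consistency term $h^p|u|_1$ in the theorem, since $\min\{p,\;\min\{s,3\}-p-1\}=\min\{s,2p+1\}-p-1$ for $0<p\le 1$, so each contribution is separately dominated by $h^{\min\{s,2p+1\}-p-1}$ and no further bookkeeping is required.
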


\section{Numerical Results}

In this section, we will report some numerical results for the IPMWX method~\eqref{ipdgmwx} and the SPMWX method~\eqref{pmwx}.
Set $\sigma=5$ in the IPMWX method~\eqref{ipdgmwx}.

\subsection{Example 1}
In the first example, let $\Omega$ be the unit square $(0, 1)^2$, and we use the uniform triangulation $\mathcal T_h$ of $\Omega$.
The exact solution is taken as
\[
u(x_1, x_2)=(\sin(\pi x_1)\sin(\pi x_2))^2.
\]
The right hand side $f$ is computed from~\eqref{fourthorderpertub}.

The errors $\|u-u_h\|_{\varepsilon,h}$ for different $h$ and $\varepsilon$
are shown in Table~\ref{tab:eneryerrorIPMWX} for the IPMWX method~\eqref{ipdgmwx}. We can see that $\|u-u_h\|_{\varepsilon,h}=O(h)$ when $\varepsilon=O(1)$, and $\|u-u_h\|_{\varepsilon, h}=O(h^2)$ when $\varepsilon\ll1$, which coincide with Corollary~\ref{cor:erroripmwx}.
The optimal convergence rate of $\|u-u_h\|_{\varepsilon, h}$ when $\varepsilon\ll1$ for this example is achieved since the exact solution $u$ is robustly smooth enough with respect to the parameter $\varepsilon$. However, such a robust high regularity of $u$ does not hold for a general right hand side $f$, as indicated by \eqref{eq:regularity1}-\eqref{eq:regularity2}. We will observe the boundary layers in the next example.
Then set $p=1$ in the SPMWX method~\eqref{pmwx}.
The errors $\|u-\widetilde{u}_h\|_{\varepsilon,p,h}$ for different $h$ and $\varepsilon$
are shown in Table~\ref{tab:eneryerrorSPMWX} for the SPMWX method~\eqref{pmwx}. It can be observed that
$\|u-\widetilde{u}_h\|_{\varepsilon,p,h}=O(h)$ no matter what value the parameter $\varepsilon$ takes, which agrees with Corollary~\ref{cor:errorspmwx}.

\begin{table}[htbp]
  \centering
  \caption{Error $\|u-u_h\|_{\varepsilon, h}$ of the IPMWX method for Example 1}
\resizebox{\textwidth}{!}{ %
\begin{tabular}{|c|c|c|c|c|c|c|c|}
\hline
$\varepsilon$ & $h$   & $2^{-2}$ & $2^{-3}$ & $2^{-4}$ & $2^{-5}$ & $2^{-6}$ & $2^{-7}$ \bigstrut\\
\hline
\multirow{2}[4]{*}{$1$} & $\|u-u_h\|_{\varepsilon, h}$ & 1.053E+01 & 5.938E+00 & 3.076E+00 & 1.553E+00 & 7.781E-01 & 3.893E-01 \bigstrut\\
\cline{2-8}      & rate & -     & 0.83  & 0.95  & 0.99  & 1.00  & 1.00  \bigstrut\\
\hline
\multirow{2}[4]{*}{$10^{-1}$} & $\|u-u_h\|_{\varepsilon, h}$ & 8.613E-01 & 5.004E-01 & 2.835E-01 & 1.512E-01 & 7.726E-02 & 3.886E-02 \bigstrut\\
\cline{2-8}      & rate & -     & 0.78  & 0.82  & 0.91  & 0.97  & 0.99  \bigstrut\\
\hline
\multirow{2}[4]{*}{$10^{-2}$} & $\|u-u_h\|_{\varepsilon, h}$ & 3.650E-01 & 1.046E-01 & 2.929E-02 & 1.405E-02 & 7.020E-03 & 3.632E-03 \bigstrut\\
\cline{2-8}      & rate & -     & 1.80  & 1.84  & 1.06  & 1.00  & 0.95  \bigstrut\\
\hline
%\multirow{2}[4]{*}{$5*10^{-3}$} & $\|u-u_h\|_{\varepsilon, h}$ & 3.747E-01 & 1.346E-01 & 2.703E-02 & 7.329E-03 & 3.529E-03 & 1.759E-03 \bigstrut\\
%\cline{2-8}      & order & -     & 1.48  & 2.32  & 1.88  & 1.05  & 1.00  \bigstrut\\
%\hline
\multirow{2}[4]{*}{$10^{-3}$} & $\|u-u_h\|_{\varepsilon, h}$ & 3.796E-01 & 1.545E-01 & 3.832E-02 & 8.846E-03 & 1.812E-03 & 3.992E-04 \bigstrut\\
\cline{2-8}      & rate & -     & 1.30  & 2.01  & 2.11  & 2.29  & 2.18  \bigstrut\\
\hline
\multirow{2}[4]{*}{$10^{-4}$} & $\|u-u_h\|_{\varepsilon, h}$ & 3.798E-01 & 1.555E-01 & 3.915E-02 & 9.585E-03 & 2.367E-03 & 5.832E-04 \bigstrut\\
\cline{2-8}      & rate & -     & 1.29  & 1.99  & 2.03  & 2.02  & 2.02  \bigstrut\\
\hline
\multirow{2}[4]{*}{$10^{-5}$} & $\|u-u_h\|_{\varepsilon, h}$ & 3.798E-01 & 1.555E-01 & 3.916E-02 & 9.593E-03 & 2.375E-03 & 5.910E-04 \bigstrut\\
\cline{2-8}      & rate & -     & 1.29  & 1.99  & 2.03  & 2.01  & 2.01  \bigstrut\\
\hline
\multirow{2}[4]{*}{$0$} & $\|u-u_h\|_{\varepsilon, h}$ & 3.798E-01 & 1.555E-01 & 3.916E-02 & 9.593E-03 & 2.375E-03 & 5.911E-04 \bigstrut\\
\cline{2-8}      & rate & -     & 1.29  & 1.99  & 2.03  & 2.01  & 2.01  \bigstrut\\
\hline
\end{tabular}%
    }
  \label{tab:eneryerrorIPMWX}%
\end{table}%

\begin{table}[htbp]
  \centering
  \caption{Error $\|u-\widetilde{u}_h\|_{\varepsilon,p,h}$ of the SPMWX method for Example 1}
\resizebox{\textwidth}{!}{ %
    \begin{tabular}{|c|c|c|c|c|c|c|c|}
    \hline
    $\varepsilon$ & $h$   & $2^{-2}$ & $2^{-3}$ & $2^{-4}$ & $2^{-5}$ & $2^{-6}$ & $2^{-7}$ \bigstrut\\
    \hline
    \multirow{2}[4]{*}{$1$} & $\|u-\widetilde{u}_h\|_{\varepsilon,p,h}$ & 1.071E+01 & 5.938E+00 & 3.060E+00 & 1.542E+00 & 7.726E-01 & 3.865E-01 \bigstrut\\
\cline{2-8}          & rate & -     & 0.85  & 0.96  & 0.99  & 1.00  & 1.00  \bigstrut\\
    \hline
    \multirow{2}[4]{*}{$10^{-1}$} & $\|u-\widetilde{u}_h\|_{\varepsilon,p,h}$ & 1.254E+00 & 6.670E-01 & 3.372E-01 & 1.690E-01 & 8.457E-02 & 4.229E-02 \bigstrut\\
\cline{2-8}          & rate & -     & 0.91  & 0.98  & 1.00  & 1.00  & 1.00  \bigstrut\\
    \hline
    \multirow{2}[4]{*}{$10^{-2}$} & $\|u-\widetilde{u}_h\|_{\varepsilon,p,h}$ & 8.198E-01 & 3.848E-01 & 1.918E-01 & 9.600E-02 & 4.802E-02 & 2.401E-02 \bigstrut\\
\cline{2-8}          & rate & -     & 1.09  & 1.00  & 1.00  & 1.00  & 1.00  \bigstrut\\
    \hline
    \multirow{2}[4]{*}{$10^{-3}$} & $\|u-\widetilde{u}_h\|_{\varepsilon,p,h}$ & 8.143E-01 & 3.807E-01 & 1.898E-01 & 9.501E-02 & 4.753E-02 & 2.377E-02 \bigstrut\\
\cline{2-8}          & rate & -     & 1.10  & 1.00  & 1.00  & 1.00  & 1.00  \bigstrut\\
    \hline
    \multirow{2}[4]{*}{$10^{-4}$} & $\|u-\widetilde{u}_h\|_{\varepsilon,p,h}$ & 8.142E-01 & 3.807E-01 & 1.897E-01 & 9.500E-02 & 4.752E-02 & 2.376E-02 \bigstrut\\
\cline{2-8}          & rate & -     & 1.10  & 1.00  & 1.00  & 1.00  & 1.00  \bigstrut\\
    \hline
    \multirow{2}[4]{*}{$10^{-5}$} & $\|u-\widetilde{u}_h\|_{\varepsilon,p,h}$ & 8.142E-01 & 3.807E-01 & 1.897E-01 & 9.500E-02 & 4.752E-02 & 2.376E-02 \bigstrut\\
\cline{2-8}          & rate & -     & 1.10  & 1.00  & 1.00  & 1.00  & 1.00  \bigstrut\\
    \hline
    \multirow{2}[4]{*}{$0$} & $\|u-\widetilde{u}_h\|_{\varepsilon,p,h}$ & 8.142E-01 & 3.807E-01 & 1.897E-01 & 9.500E-02 & 4.752E-02 & 2.376E-02 \bigstrut\\
\cline{2-8}          & rate & -     & 1.10  & 1.00  & 1.00  & 1.00  & 1.00  \bigstrut\\
    \hline
    \end{tabular}%
    }
  \label{tab:eneryerrorSPMWX}%
\end{table}%

\subsection{Example 2}

Now we examine the performance of the IPMWX method~\eqref{ipdgmwx} for problem~\eqref{fourthorderpertub} with boundary layers on the same domain $\Omega$ and uniform triangulation $\mathcal T_h$ as in Example 1.
Take the exact solution of the Poisson equation~\eqref{poisson} to be
\[
u^0(x_1, x_2)=\sin(\pi x_1)\sin(\pi x_2).
\]
Then the right hand term for problems~\eqref{fourthorderpertub} and \eqref{poisson} is set to be
\[
f(x_1, x_2)=-\Delta u^0=2\pi^2\sin(\pi x_1)\sin(\pi x_2).
\]
The solution $u$ for problem~\eqref{fourthorderpertub} with this right hand term possesses strong boundary layers when $\varepsilon$ is very small.
The explicit expression of $u$ is unknown.
Here we take $\varepsilon=10^{-6}$.
Numerical errors $\|u^0-u_h\|_0$, $|u^0-u_h|_{1,h}$ and $\|u^0-u_h\|_{\varepsilon, h}$ are shown in Table~\ref{tab:errorsIPMWXex2}.
It can be observed that the convergence rate of $\|u^0-u_h\|_{\varepsilon, h}$ is just $0.5$, as indicated by \eqref{eq:boundarylayerErrEstimate2}.
We can recover the optimal convergence rates for errors $\|u^0-u_h\|_0$, $|u^0-u_h|_{1,h}$ and $\|u^0-u_h\|_{\varepsilon, h}$ by using the Nitsche's
technique as in \cite{GuzmanLeykekhmanNeilan2012} (see Remark~\ref{remarknew}).

\begin{table}
  \centering
  \caption{Numerical errors of the IPMWX method for Example 2 with $\varepsilon=10^{-6}$}\label{tab:errorsIPMWXex2}
  \begin{tabular}{|c|c|c|c|c|c|c|}
    \hline
    % after \\: \hline or \cline{col1-col2} \cline{col3-col4} ...
    $h$ & $\|u^0-u_h\|_0$ & rate & $|u^0-u_h|_{1,h}$ & rate & $\|u^0-u_h\|_{\varepsilon, h}$ & rate \\
  \hline  $2^{-2}$ & 5.2098E-02 & - & 1.5242E+00 & - & 1.6884E+00 & - \\
  \hline  $2^{-3}$ & 1.5193E-02 & 1.78 & 9.3654E-01 & 0.70 & 1.0357E+00 & 0.71 \\
  \hline  $2^{-4}$ & 5.3450E-03 & 1.51 & 6.3412E-01 & 0.56 & 7.0261E-01 & 0.56 \\
  \hline  $2^{-5}$ & 2.0058E-03 & 1.41 & 4.4401E-01 & 0.51 & 4.9227E-01 & 0.51 \\
  \hline  $2^{-6}$ & 7.8914E-04 & 1.35 & 3.1327E-01 & 0.50 & 3.4738E-01 & 0.50 \\
  \hline  $2^{-7}$ & 3.2819E-04 & 1.27 & 2.2140E-01 & 0.50 & 2.4552E-01 & 0.50 \\
  \hline  $2^{-8}$ & 1.4462E-04 & 1.18 & 1.5653E-01 & 0.50 & 1.7359E-01 & 0.50 \\
    \hline
  \end{tabular}
\end{table}

\subsection{Example 3}

At last, we testify the IPMWX method~\eqref{ipdgmwx} in three dimensions.
Let $\Omega$ be the unit cube $(0, 1)^3$, and take the uniform triangulation.
Set
\[
u(x_1, x_2, x_3)=(\sin(\pi x_1)\sin(\pi x_2)\sin(\pi x_3))^2,
\]
and the right hand side $f$ is computed from~\eqref{fourthorderpertub}.

The errors $\|u-u_h\|_{\varepsilon,h}$ for different $h$ and $\varepsilon$
are shown in Table~\ref{tab:eneryerrorIPMWXex3} for the IPMWX method~\eqref{ipdgmwx}.
As indicated by Corollary~\ref{cor:erroripmwx},
it is numerically shown again that $\|u-u_h\|_{\varepsilon,h}=O(h)$ when $\varepsilon=O(1)$, and $\|u-u_h\|_{\varepsilon, h}=O(h^2)$ when $\varepsilon\ll1$.

\begin{table}[htbp]
  \centering
  \caption{Error $\|u-u_h\|_{\varepsilon, h}$ of the IPMWX method for Example 3}
\resizebox{\textwidth}{!}{ %
    \begin{tabular}{|c|c|c|c|c|c|c|}
    \hline
    $\varepsilon$ & $h$   & $2^{-1}$ & $2^{-2}$ & $2^{-3}$ & $2^{-4}$ & $2^{-5}$ \bigstrut\\
    \hline
    \multirow{2}[4]{*}{$1$} & $\|u-u_h\|_{\varepsilon, h}$ & 1.189E+01 & 7.346E+00 & 3.635E+00 & 1.798E+00 & 8.957E-01 \bigstrut\\
\cline{2-7}          & order & -     & 0.69  & 1.02  & 1.02  & 1.01  \bigstrut\\
    \hline
    \multirow{2}[4]{*}{$10^{-1}$} & $\|u-u_h\|_{\varepsilon, h}$ & 1.212E+00 & 6.297E-01 & 3.379E-01 & 1.762E-01 & 8.913E-02 \bigstrut\\
\cline{2-7}          & order & -     & 0.94  & 0.90  & 0.94  & 0.98  \bigstrut\\
    \hline
    \multirow{2}[4]{*}{$10^{-2}$} & $\|u-u_h\|_{\varepsilon, h}$ & 7.835E-01 & 2.634E-01 & 7.093E-02 & 2.214E-02 & 9.075E-03 \bigstrut\\
\cline{2-7}          & order & -     & 1.57  & 1.89  & 1.68  & 1.29  \bigstrut\\
    \hline
    \multirow{2}[4]{*}{$10^{-3}$} & $\|u-u_h\|_{\varepsilon, h}$ & 7.787E-01 & 2.593E-01 & 6.668E-02 & 1.698E-02 & 4.280E-03 \bigstrut\\
\cline{2-7}          & order & -     & 1.59  & 1.96  & 1.97  & 1.99  \bigstrut\\
    \hline
    \multirow{2}[4]{*}{$10^{-4}$} & $\|u-u_h\|_{\varepsilon, h}$ & 7.786E-01 & 2.593E-01 & 6.667E-02 & 1.697E-02 & 4.269E-03 \bigstrut\\
\cline{2-7}          & order & -     & 1.59  & 1.96  & 1.97  & 1.99  \bigstrut\\
    \hline
    \multirow{2}[4]{*}{$10^{-5}$} & $\|u-u_h\|_{\varepsilon, h}$ & 7.786E-01 & 2.593E-01 & 6.667E-02 & 1.697E-02 & 4.269E-03 \bigstrut\\
\cline{2-7}          & order & -     & 1.59  & 1.96  & 1.97  & 1.99  \bigstrut\\
    \hline
    \multirow{2}[4]{*}{$0$} & $\|u-u_h\|_{\varepsilon, h}$ & 7.786E-01 & 2.593E-01 & 6.667E-02 & 1.697E-02 & 4.269E-03 \bigstrut\\
\cline{2-7}          & order & -     & 1.59  & 1.96  & 1.97  & 1.99  \bigstrut\\
    \hline
    \end{tabular}%
    }
  \label{tab:eneryerrorIPMWXex3}%
\end{table}%

% ----------------------------------------------------------------
%\bibliographystyle{siam}
%\bibliography{paper}

\begin{thebibliography}{10}

\bibitem{AnHuang2015}
{\sc R.~An and X.~Huang}, {\em A compact {$C^0$} discontinuous {G}alerkin
  method for {K}irchhoff plates}, Numer. Methods Partial Differential
  Equations, 31 (2015), pp.~1265--1287.

\bibitem{ArgyrisFriedScharpf1968}
{\sc J.~Argyris, I.~Fried, and D.~Scharpf}, {\em {The TUBA family of plate
  elements for the matrix displacement method}}, The Aeronautical Journal of
  the Royal Aeronautical Society, 72 (1968), pp.~701--709.

\bibitem{Arnold1982}
{\sc D.~N. Arnold}, {\em An interior penalty finite element method with
  discontinuous elements}, SIAM J. Numer. Anal., 19 (1982), pp.~742--760.

\bibitem{ArnoldBrezziCockburnMarini2001}
{\sc D.~N. Arnold, F.~Brezzi, B.~Cockburn, and L.~D. Marini}, {\em Unified
  analysis of discontinuous {G}alerkin methods for elliptic problems}, SIAM J.
  Numer. Anal., 39 (2001/02), pp.~1749--1779.

\bibitem{BabuvskaZlamal1973}
{\sc I.~Babu{\v{s}}ka and M.~Zl{\'a}mal}, {\em Nonconforming elements in the
  finite element method with penalty}, SIAM J. Numer. Anal., 10 (1973),
  pp.~863--875.

\bibitem{BrennerGudiNeilanSung2011}
{\sc S.~C. Brenner, T.~Gudi, M.~Neilan, and L.-y. Sung}, {\em {$C^0$} penalty
  methods for the fully nonlinear {M}onge-{A}mp\`ere equation}, Math. Comp., 80
  (2011), pp.~1979--1995.

\bibitem{BrennerGudiSung2010a}
{\sc S.~C. Brenner, T.~Gudi, and L.-Y. Sung}, {\em A weakly over-penalized
  symmetric interior penalty method for the biharmonic problem}, Electron.
  Trans. Numer. Anal., 37 (2010), pp.~214--238.

\bibitem{BrennerNeilan2011}
{\sc S.~C. Brenner and M.~Neilan}, {\em A {$C^0$} interior penalty method for a
  fourth order elliptic singular perturbation problem}, SIAM J. Numer. Anal.,
  49 (2011), pp.~869--892.

\bibitem{BrennerOwensSung2008}
{\sc S.~C. Brenner, L.~Owens, and L.-Y. Sung}, {\em A weakly over-penalized
  symmetric interior penalty method}, Electron. Trans. Numer. Anal., 30 (2008),
  pp.~107--127.

\bibitem{BrennerOwensSung2012}
\leavevmode\vrule height 2pt depth -1.6pt width 23pt, {\em Higher order weakly
  over-penalized symmetric interior penalty methods}, J. Comput. Appl. Math.,
  236 (2012), pp.~2883--2894.

\bibitem{BrennerScott2008}
{\sc S.~C. Brenner and L.~R. Scott}, {\em The mathematical theory of finite
  element methods}, Springer, New York, third~ed., 2008.

\bibitem{BrezziManziniMariniPietraEtAl1999}
{\sc F.~Brezzi, G.~Manzini, D.~Marini, P.~Pietra, and A.~Russo}, {\em
  Discontinuous finite elements for diffusion problems}, Atti Convegno in onore
  di F. Brioschi (Milano 1997), Istituto Lombardo, Accademia di Scienze e
  Lettere,  (1999), pp.~197--217.

\bibitem{BrezziManziniMariniPietraEtAl2000}
\leavevmode\vrule height 2pt depth -1.6pt width 23pt, {\em Discontinuous
  {G}alerkin approximations for elliptic problems}, Numer. Methods Partial
  Differential Equations, 16 (2000), pp.~365--378.

\bibitem{ChenChen2014}
{\sc H.~Chen and S.~Chen}, {\em Uniformly convergent nonconforming element for
  3-{D} fourth order elliptic singular perturbation problem}, J. Comput. Math.,
  32 (2014), pp.~687--695.

\bibitem{ChenChenQiao2013}
{\sc H.~Chen, S.~Chen, and Z.~Qiao}, {\em {$C^0$}-nonconforming tetrahedral and
  cuboid elements for the three-dimensional fourth order elliptic problem},
  Numer. Math., 124 (2013), pp.~99--119.

\bibitem{ChenChenXiao2014}
{\sc H.~Chen, S.~Chen, and L.~Xiao}, {\em Uniformly convergent
  {$C^0$}-nonconforming triangular prism element for fourth-order elliptic
  singular perturbation problem}, Numer. Methods Partial Differential
  Equations, 30 (2014), pp.~1785--1796.

\bibitem{ChenLiuQiao2010}
{\sc S.~Chen, M.~Liu, and Z.~Qiao}, {\em An anisotropic nonconforming element
  for fourth order elliptic singular perturbation problem}, Int. J. Numer.
  Anal. Model., 7 (2010), pp.~766--784.

\bibitem{ChenZhaoShi2005}
{\sc S.-c. Chen, Y.-c. Zhao, and D.-y. Shi}, {\em Non {$C^0$} nonconforming
  elements for elliptic fourth order singular perturbation problem}, J. Comput.
  Math., 23 (2005), pp.~185--198.

\bibitem{Ciarlet1978}
{\sc P.~G. Ciarlet}, {\em The finite element method for elliptic problems},
  North-Holland Publishing Co., Amsterdam, 1978.

\bibitem{CockburnKarniadakisShu2000}
{\sc B.~Cockburn, G.~E. Karniadakis, and C.-W. Shu}, eds., {\em Discontinuous
  {G}alerkin methods}, Springer-Verlag, Berlin, 2000.
\newblock Theory, computation and applications, Papers from the 1st
  International Symposium held in Newport, RI, May 24--26, 1999.

\bibitem{CockburnShu1998}
{\sc B.~Cockburn and C.-W. Shu}, {\em The local discontinuous {G}alerkin method
  for time-dependent convection-diffusion systems}, SIAM J. Numer. Anal., 35
  (1998), pp.~2440--2463.

\bibitem{DuLinZhang2016}
{\sc S.~Du, R.~Lin, and Z.~Zhang}, {\em Robust residual-based a posteriori
  error estimators for mixed finite element methods for fourth order elliptic
  singularly perturbed problems}, arXiv:1609.04506,  (2016).

\bibitem{EngelGarikipatiHughesLarsonEtAl2002}
{\sc G.~Engel, K.~Garikipati, T.~J.~R. Hughes, M.~G. Larson, L.~Mazzei, and
  R.~L. Taylor}, {\em Continuous/discontinuous finite element approximations of
  fourth-order elliptic problems in structural and continuum mechanics with
  applications to thin beams and plates, and strain gradient elasticity},
  Comput. Methods Appl. Mech. Engrg., 191 (2002), pp.~3669--3750.

\bibitem{Frank1997}
{\sc L.~S. Frank}, {\em Singular perturbations in elasticity theory}, IOS
  Press, Amsterdam; Ohmsha Ltd., Tokyo, 1997.

\bibitem{FranzRoosWachtel2014}
{\sc S.~Franz, H.-G. Roos, and A.~Wachtel}, {\em A {$C^0$} interior penalty
  method for a singularly-perturbed fourth-order elliptic problem on a
  layer-adapted mesh}, Numer. Methods Partial Differential Equations, 30
  (2014), pp.~838--861.

\bibitem{GeorgoulisHouston2009}
{\sc E.~H. Georgoulis and P.~Houston}, {\em Discontinuous {G}alerkin methods
  for the biharmonic problem}, IMA J. Numer. Anal., 29 (2009), pp.~573--594.

\bibitem{Grisvard1985}
{\sc P.~Grisvard}, {\em Elliptic problems in nonsmooth domains}, Pitman
  (Advanced Publishing Program), Boston, MA, 1985.

\bibitem{Gudi2010a}
{\sc T.~Gudi}, {\em A new error analysis for discontinuous finite element
  methods for linear elliptic problems}, Math. Comp., 79 (2010),
  pp.~2169--2189.

\bibitem{Gudi2010}
\leavevmode\vrule height 2pt depth -1.6pt width 23pt, {\em Some nonstandard
  error analysis of discontinuous {G}alerkin methods for elliptic problems},
  Calcolo, 47 (2010), pp.~239--261.

\bibitem{GuzmanLeykekhmanNeilan2012}
{\sc J.~Guzm{\'a}n, D.~Leykekhman, and M.~Neilan}, {\em A family of
  non-conforming elements and the analysis of {N}itsche's method for a
  singularly perturbed fourth order problem}, Calcolo, 49 (2012), pp.~95--125.

\bibitem{HuangHuangHan2010}
{\sc J.~Huang, X.~Huang, and W.~Han}, {\em A new {$C^0$} discontinuous
  {G}alerkin method for {K}irchhoff plates}, Comput. Methods Appl. Mech.
  Engrg., 199 (2010), pp.~1446--1454.

\bibitem{HuangHuang2014}
{\sc X.~Huang and J.~Huang}, {\em A reduced local {$C^0$} discontinuous
  {G}alerkin method for {K}irchhoff plates}, Numer. Methods Partial
  Differential Equations, 30 (2014), pp.~1902--1930.

\bibitem{HuangHuang2016}
\leavevmode\vrule height 2pt depth -1.6pt width 23pt, {\em A {S}uperconvergent
  {$C^0$} {D}iscontinuous {G}alerkin {M}ethod for {K}irchhoff {P}lates: {E}rror
  {E}stimates, {H}ybridization and {P}ostprocessing}, J. Sci. Comput., 69
  (2016), pp.~1251--1278.

\bibitem{HuangLai2011}
{\sc X.~Huang and J.~Lai}, {\em A super penalty c0 discontinuous galerkin
  method for kirchhoff plates}, in Multimedia Technology (ICMT), 2011
  International Conference on, july 2011, pp.~1936 --1939.

\bibitem{KarakocNeilan2014}
{\sc S.~B.~G. Karakoc and M.~Neilan}, {\em A {$C^0$} finite element method for
  the biharmonic problem without extrinsic penalization}, Numer. Methods
  Partial Differential Equations, 30 (2014), pp.~1254--1278.

\bibitem{Morley1968}
{\sc L.~S.~D. Morley}, {\em {The triangular equilibrium element in the solution
  of plate bending problems}}, Aero. Quart., 19 (1968), pp.~149--169.

\bibitem{MozolevskiSuliBosing2007}
{\sc I.~Mozolevski, E.~S{\"u}li, and P.~R. B{\"o}sing}, {\em {$hp$}-version a
  priori error analysis of interior penalty discontinuous {G}alerkin finite
  element approximations to the biharmonic equation}, J. Sci. Comput., 30
  (2007), pp.~465--491.

\bibitem{NilssenTaiWinther2001}
{\sc T.~K. Nilssen, X.-C. Tai, and R.~Winther}, {\em A robust nonconforming
  {$H^2$}-element}, Math. Comp., 70 (2001), pp.~489--505.

\bibitem{Novick-Cohen2008}
{\sc A.~Novick-Cohen}, {\em The {C}ahn-{H}illiard equation}, in Handbook of
  differential equations: evolutionary equations. {V}ol. {IV}, Handb. Differ.
  Equ., Elsevier/North-Holland, Amsterdam, 2008, pp.~201--228.

\bibitem{PerairePersson2008}
{\sc J.~Peraire and P.-O. Persson}, {\em The compact discontinuous {G}alerkin
  ({CDG}) method for elliptic problems}, SIAM J. Sci. Comput., 30 (2008),
  pp.~1806--1824.

\bibitem{Semper1992}
{\sc B.~Semper}, {\em Conforming finite element approximations for a
  fourth-order singular perturbation problem}, SIAM J. Numer. Anal., 29 (1992),
  pp.~1043--1058.

\bibitem{SuliMozolevski2007}
{\sc E.~S{\"u}li and I.~Mozolevski}, {\em {$hp$}-version interior penalty
  {DGFEM}s for the biharmonic equation}, Comput. Methods Appl. Mech. Engrg.,
  196 (2007), pp.~1851--1863.

\bibitem{TaiWinther2006}
{\sc X.-C. Tai and R.~Winther}, {\em A discrete de {R}ham complex with enhanced
  smoothness}, Calcolo, 43 (2006), pp.~287--306.

\bibitem{Verfurth1996}
{\sc R.~Verf{\"u}rth}, {\em A Review of A Posteriori Error Estimation and
  Adaptive Mesh-Refinement Techniques}, Wiley-Teubner, Chichester, 1996.

\bibitem{WangWuXie2013}
{\sc L.~Wang, Y.~Wu, and X.~Xie}, {\em Uniformly stable rectangular elements
  for fourth order elliptic singular perturbation problems}, Numer. Methods
  Partial Differential Equations, 29 (2013), pp.~721--737.

\bibitem{Wang2001}
{\sc M.~Wang}, {\em On the necessity and sufficiency of the patch test for
  convergence of nonconforming finite elements}, SIAM J. Numer. Anal., 39
  (2001), pp.~363--384.

\bibitem{WangMeng2007}
{\sc M.~Wang and X.~Meng}, {\em A robust finite element method for a 3-{D}
  elliptic singular perturbation problem}, J. Comput. Math., 25 (2007),
  pp.~631--644.

\bibitem{WangShiXu2007}
{\sc M.~Wang, Z.-c. Shi, and J.~Xu}, {\em A new class of {Z}ienkiewicz-type
  non-conforming element in any dimensions}, Numer. Math., 106 (2007),
  pp.~335--347.

\bibitem{WangShiXu2007a}
{\sc M.~Wang, Z.-C. Shi, and J.~Xu}, {\em Some {$n$}-rectangle nonconforming
  elements for fourth order elliptic equations}, J. Comput. Math., 25 (2007),
  pp.~408--420.

\bibitem{WangXu2006}
{\sc M.~Wang and J.~Xu}, {\em The {M}orley element for fourth order elliptic
  equations in any dimensions}, Numer. Math., 103 (2006), pp.~155--169.

\bibitem{WangXu2013}
\leavevmode\vrule height 2pt depth -1.6pt width 23pt, {\em Minimal finite
  element spaces for {$2m$}-th-order partial differential equations in
  {$R^n$}}, Math. Comp., 82 (2013), pp.~25--43.

\bibitem{WangXuHu2006}
{\sc M.~Wang, J.-c. Xu, and Y.-c. Hu}, {\em Modified {M}orley element method
  for a fourth order elliptic singular perturbation problem}, J. Comput. Math.,
  24 (2006), pp.~113--120.

\bibitem{WellsDung2007}
{\sc G.~N. Wells and N.~T. Dung}, {\em A {$C\sp 0$} discontinuous {G}alerkin
  formulation for {K}irchhoff plates}, Comput. Methods Appl. Mech. Engrg., 196
  (2007), pp.~3370--3380.

\bibitem{XieShiLi2010}
{\sc P.~Xie, D.~Shi, and H.~Li}, {\em A new robust {$C^0$}-type nonconforming
  triangular element for singular perturbation problems}, Appl. Math. Comput.,
  217 (2010), pp.~3832--3843.

\bibitem{Zhang2009a}
{\sc S.~Zhang}, {\em A family of 3{D} continuously differentiable finite
  elements on tetrahedral grids}, Appl. Numer. Math., 59 (2009), pp.~219--233.

\bibitem{ZhangWang2008}
{\sc S.~Zhang and M.~Wang}, {\em A posteriori estimator of nonconforming finite
  element method for fourth order elliptic perturbation problems}, J. Comput.
  Math., 26 (2008), pp.~554--577.

\end{thebibliography}

\end{document}